\setlist[enumerate]{leftmargin=.5in}
\setlist[itemize]{leftmargin=.5in}
\newcommand{\tr}[1]{\mathrm{tr}\left(#1\right)}
\renewcommand*{\H}{\mathrm{H}}
\newcommand{\fnorm}[1]{\left\| #1 \right\|_{\mathrm{F}}}
\newcommand{\norm}[1]{\left\| #1 \right\|}
\newcommand{\bbC}{\mathbb{C}}
\newcommand{\bbE}{\mathbb{E}}
\newcommand{\bbR}{\mathbb{R}}
\newcommand{\calE}{\mathcal{E}}
\newcommand{\tSigma}{\widetilde{\Sigma}}
\newcommand{\tLambda}{\widetilde{\Lambda}}
\newcommand{\tD}{\widetilde{D}}
\newcommand{\tG}{\widetilde{G}}
\newcommand{\tP}{\widetilde{P}}
\newcommand{\tQ}{\widetilde{Q}}
\newcommand{\tR}{\widetilde{R}}
\newcommand{\tvartheta}{\tilde{\vartheta}}
\newcommand{\tvarphi}{\tilde{\varphi}}
\newcommand{\tchi}{\tilde{\chi}}
\title{Fast randomized algorithms for low-rank matrix approximations with
applications in global comparative analysis of a class of data
sets~\thanks{The research of this work was partially supported by the
National Key R\&D Program of China 2020YFA0711902, Natural Science
Foundation of China under Grant No.~11971243, 12071159, 11690013,
U1811464. This work was also supported in part by the major key project of
Peng Cheng Laboratory under grant PCL2023AS1-2.}}
\author{
Weiwei Xu~\thanks{School of Mathematics and Statistics, Nanjing
University of Information Science and Technology, Nanjing 210044,
China. The Peng Cheng Laboratory, Shenzhen 518055, China, and with the
Pazhou Laboratory (Huangpu), Guangzhou 510555, China.} \and
Weijie Shen~\thanks{School of Mathematics and Statistics, Nanjing
University of Information Science and Technology, Nanjing 210044,
China.} \and
Wen Li~\thanks{School of Mathematical Sciences, South China Normal
University, Guangzhou 510631, China.} \and
Weiguo Gao~\thanks{School of Mathematical Sciences and School of Data
Science, Fudan University, Shanghai 200433, China.} \and
Yingzhou Li~\thanks{School of Mathematical Sciences, Fudan University,
Shanghai 200433, China.}
}
\begin{document}

\maketitle

\begin{abstract}
Generalized singular values (GSVs) play an essential role in the
comparative analysis. In the real world data for comparative analysis,
both data matrices are usually numerically low-rank. This paper
proposes a randomized algorithm to first approximately extract bases
and then calculate GSVs efficiently. The accuracy of both basis
extration and comparative analysis quantities, angular distances,
generalized fractions of the eigenexpression, and generalized
normalized Shannon entropy, are rigursly analyzed. The proposed
algorithm is applied to both synthetic data sets and the genome-scale
expression data sets. Comparing to other GSVs algorithms, the proposed
algorithm achieves the fastest runtime while preserving sufficient
accuracy in comparative analysis.
\end{abstract}

\begin{keywords}
Randomized algorithm,  low-rank matrix approximations, comparative
analysis, genome-scale expression data sets
\end{keywords}

\begin{AMS}
68Q25, 68W20, 92B05
\end{AMS}

\section{Introduction}

The generalized singular value decomposition (GSVD) is a valuable and
versatile mathematical tool in various scientific fields, including but
not limited to, the general Gauss-Markov linear model, real-time signal
processing, image processing. In the past decades, GSVD has played an
essential role in the comparative analysis of genome-scale expression
data, DNA-sequence, and mRNA-expression data~\cite{abb00, abb03, d98,
el89, z92}. Motivated by the comparative analysis, we propose a randomized method to accelerate the GSVD computation therein, which could be
applied to other applications as well.

In the comparative analysis of expression data sets, we are given two
expression data sets, $G_1$ and $G_2$. The data sets $G_1$ and $G_2$ are
of size $m$-genes $\times$ $n$-arrays and $p$-genes $\times$ $n$-arrays
respectively. In order to distinguish the similarities and dissimilarities between two expression data sets, the GSVD is introduced as a mathematical
tool. GSVD simultaneously transforms $G_1$ and $G_2$ to two reduced
$n$-genelets $\times$ $n$-arraylets spaces~\cite{abb00}. Then, various
similarity measurements are adopted to compare the two data sets.

Throughout this paper, we adapt the following definitions of Grassman
matrix pair (GMP) and GSVD~\cite{gv13, ps81}. For $G_1 \in \bbC^{m \times
n}$ and $G_2 \in \bbC^{p \times n}$, the matrix pair $\{G_1, G_2\}$ is an
$(m, p, n)$ \emph{Grassman matrix pair} if $\rank{(G_1^\H, G_2^\H)} = n$.
Now we consider an $(m,p,n)$-GMP $\{G_1, G_2\}$. Its GSVD is defined
as~\footnote{This GSVD is actually reduced GSVD under the tall rectangular
version, which is the one used in comparative analysis. Our proposed
method is able to address non-reduced GSVD efficiently as well.},
\begin{equation} \label{eq:gsvd1}
    G_1 = U \Sigma_{G_1} R, \quad
    G_2 = V \Sigma_{G_2} R,
\end{equation}
where $U \in \bbC^{m \times n}$ and $V \in \bbC^{p \times n}$ are column orthogonal matrices, $R \in \bbC^{n \times n}$ is a nonsingular matrix, $\Sigma_{G_1}
\in \bbR^{n \times n}$ and $\Sigma_{G_2} \in \bbR^{n \times n}$ are
diagonal matrices with generalized singular values on their diagonals,
i.e., $\Sigma_{G_1} = \diag{\alpha_1, \ldots, \alpha_n}$ and
$\Sigma_{G_2} = \diag{\beta_1, \ldots, \beta_n}$ with
\begin{equation}\label{ab}
    \begin{split}
        1 = \alpha_1 = \cdots = \alpha_r
        > \alpha_{r+1} \geq \cdots \geq \alpha_{r+s}
        > \alpha_{r+s+1} = \cdots = \alpha_n = 0,\\
        0 = \beta_1 = \cdots = \beta_r
        < \beta_{r+1} \leq \cdots \leq \beta_{r+s}
        < \beta_{r+s+1} = \cdots = \beta_n = 1,
    \end{split}
\end{equation}
and $\alpha_i^2 + \beta_i^2 = 1$ for $1 \leq i \leq n$. Here $r$ and
$n-r-s$ are numbers of zeros in $\{\beta_i\}$ and $\{\alpha_i\}$
respectively, and $s$ counts the number that both $\alpha_i$ and $\beta_i$
are not zeros. In the rest of the paper, we refer to $s$ as the rank of
the GSVD.

Once the GSVD of the expression data sets $G_1$ and $G_2$ is obtained, the
matrix $R$ defines the $n$-arraylets $\times n$-arrays basis
transformation that is shared by both data sets. Matrices $U$ and $V$
define the $m$-genes $\times$ $n$-genelets and $p$-genes $\times$
$n$-genelets basis transformation for $G_1$ and $G_2$ respectively. With
these basis transformations, the original comparison between $G_1$ and
$G_2$ would be carried out by the comparison of $\{\alpha_i\}_{i=1}^n$ and
$\{\beta_i\}_{i=1}^n$. The relative significance of the $\ell$-th genelet,
i.e., the significance of the $\ell$-th genelet in $G_1$ compared to that
in $G_2$, is determined by the ratio of $\alpha_\ell$ and $\beta_\ell$.
We denote the relative significance as
\begin{equation*}
    \rho_\ell = \frac{\alpha_\ell}{\beta_\ell}
\end{equation*}
for $\ell = r+1, \dots, n$, and $\rho_\ell = \infty$ for $\ell = 1, \dots,
r$. Besides the relative significance, there are other measurements used
in the comparative analysis: antisymmetric angular distance, generalized
fractions of eigenexpression, and generalized normalized Shannon entropy.

The \emph{antisymmetric angular distance} for the $\ell$-th genelet
between $G_1$ and $G_2$ is defined as,
\begin{equation} \label{eq:antisymmetric-angular-distance}
    \vartheta_\ell = \arctan \left( \frac{\alpha_\ell}{\beta_\ell} \right)
    - \frac{\pi}{4}.
\end{equation}
An antisymmetric angular distance of $\vartheta_\ell = 0$ indicates that
the $\ell$-th genelet is of equal significance in both data sets. While,
the distance $\vartheta_\ell = \pi/4$ indicates that the $\ell$-th genelet
in $G_1$ is significant relative to $G_2$, whereas $\vartheta_\ell =
-\pi/4$ indicates the other way around, i.e., the $\ell$-th genelet in
$G_2$ is significant relative to $G_1$. The antisymmetric angular
distances are ordered as $\pi/4 \geq \vartheta_1 \geq \cdots \geq
\vartheta_n \geq -\pi/4$.

The \emph{generalized fractions of eigenexpression} of
$G_1$ and $G_2$ are defined as,
\begin{flalign} \label{eq:generalized-fraction}
    P_{1,\ell} = \alpha_\ell^2/\sum_{k=1}^n \alpha_k^2,
    \quad
    P_{2,\ell} = \beta_\ell^2/\sum_{k=1}^n \beta_k^2,
\end{flalign}
respectively for $\ell = 1, \dots, n$. The generalized fractions of
eigenexpression is not a relative distance between $G_1$ and $G_2$. The
fraction $P_{i,\ell}$ indicates the significance of the $\ell$-th genelet
in $G_i$ for $i=1,2$. Note that the generalized fractions of
eigenexpression $P_{1,\ell}$ and $P_{2,\ell}$ can be viewed as the
probability that a genelet in $G_1$ and $G_2$ respectively.

The \emph{generalized normalized Shannon entropy},
\begin{flalign} \label{eq:generalized-shannon-entropy}
    D_i = \frac{-1}{\log{n}} \sum_{k=1}^n
    P_{i,k}\log{P_{i,k}},
\end{flalign}
for $i = 1, 2$, defines an entropy
measurement for the generalized fractions for $G_1$ and $G_2$. By the property of entropy, we have $D_i \in [0, 1]$. The
 generalized normalized Shannon entropy measures the complexity of
 expression of genelets in the data set. If $D_i = 0$, then all expressions
 are captured by a single genelet in $G_i$. If $D_i = 1$, then expressions
 are in a disordered status, and all genelets in $G_i$ are equally
 expressed.

Numerical methods of GSVD have been well developed. The GSVD of two real
matrices was first proposed by Van Loan~\cite{23}. Paige and
Saunders~\cite{ps81} used the CS decomposition of the unitary matrix to
propose GSVD of matrix pair, which extended the real matrices in \cite{23}
to complex matrices. Bai and Demmel~\cite{5} described a variation of
Paige's algorithm for computing the GSVD with an extra preprocessing step
and a new algorithm in addressing $2 \times 2$ triangular GSVD.
Stewart~\cite{21} and Van Loan~\cite{24} proposed two backward stable
algorithms for computing the GSVD. Ewerbring and Luk~\cite{el89} and
Zha~\cite{z92} extended GSVD for matrix triplets. Recently,
Friedland~\cite{f05} proposed a new GSVD algorithm, which suppresses the
sensitivity to an error in the entries of the matrices. Xu et.
al.~\cite{xnb20} proposed the geometric inexact Newton method for
generalized singular values of the Grassmann matrix pair. The GSVD of the
matrix pair in MATLAB is calculated using the CS decomposition described
in \cite{gv13} and the built-in SVD and QR functions.

In this paper, we first propose a low-rank approximation algorithm based
on random sampling technique with QR decomposition with pivoting. The
randomized low-rank algorithm is then applied to approximately extract the
column bases of $G_1$ and $G_2$ matrices. On top of the basis extraction,
we propose \cref{alg:rand-gsv} to obtain GSVs. The approximation accuracy
of the basis extraction is analyzed in \cref{thm:projector} and the
accuracy mainly depends on the decay property of the GSVs. Combined with
the perturbation analysis of GSVs, we derive the accuracy analysis for
quantities in comparative analysis. Finally, on both synthetic data sets
and practical genome-scale expression data sets, the proposed algorithm
shows advantages in runtime. And the accuracy is way beyond the desired
ones in comparative analysis tasks.

The rest of the paper is organized as follows. In
\cref{sec:randomized-gsvd}, a randomized method is proposed to compute the
GSVs of $(m, p, n)$-GMPs. Then, the
generalized fractions of eigenexpression and generalized normalized
Shannon entropy for comparative analysis of two data sets are calculated
and analyzed in \cref{sec:gsvd-analysis}. In \cref{sec:numerical-results},
numerical results for both synthetic data sets and practical yeast and human cell-cycle expression data sets are reported to demonstrate the efficiency of the proposed randomized method. Finally,
\cref{sec:conclusion} concludes the paper with some discussions on future
work.

\section{Randomized algorithms for low-rank matrix approximations for GSVs}
\label{sec:randomized-gsvd}

In this section, Gaussian random matrices are used to construct randomized
algorithms with low-rank matrix approximations to remove the near-zero
GSVs, either $\alpha$ or $\beta$, and reduce the overall computational
cost. In the following, we first give a detailed description of our
randomized algorithms for GSVs. Then, the computational cost comparison is
discussed.

The randomized algorithm for GSVs is composed of two phases: 1) randomized
algorithm for basis extraction; 2) calculating GSVs for compressed matrix
pair.

The randomized algorithm for basis extraction aims to find an orthonormal
basis sets for $U$ and $V$ in \cref{eq:gsvd1} with non-zero GSVs
$\alpha_i$ and $\beta_j$, respectively. Our randomized algorithm is
essentially the same as the basis extraction algorithm in randomized SVD \cite{p21}.
We need to apply the randomized algorithm to $G_1$ and $G_2$, and obtain
an approximated basis of $U$ and $V$ with non-zero GSVs. The difference
mainly lies in the later analysis in \cref{sec:gsvd-analysis}. Our goal of
the randomized algorithm is approximating $U$ and $V$ whereas the original
randomized SVD aims to approximate the left or right singular vectors of
the matrix. Hence, as we will see later, the condition number of $R$ would
get into play in our approximation error analysis. In order to be
self-contained, we will describe the randomized algorithm in extracting
the basis.

Since the sizes of the approximated basis of $U$ and $V$ are unknown in a
priori, we conduct an iterative scheme to obtain the basis batch by batch.
We could also calculate the basis one by one, which is less efficient in
modern computer architecture. Hence, we define a blocksize hyperparameter
$b$ controlling the batch size to benefit from the memory hierarchy
efficiency. In many environments, picking $b$ between 10 and 100 would be
near optimal. \cite{p21} In our numerical experiments, we set $b$ to be 100.
For each iteration in the basis extraction, we apply the
matrix to a Gaussian random matrix of size $m \times b$, followed by a
projection matrix projecting out the bases from previous iterations. Then,
we apply the reduced QR factorization to the matrix product result and
obtain another batch of bases. We seek to build an orthonormal matrix $Q$
such that
\begin{equation*}
    \fnorm{ (I - QQ^\H) G } < \epsilon,
\end{equation*}
which is adopted as the stopping criterion. The square of the Frobinius
norm of $G - QQ^\H G$ could be calculated in a cumulated way efficiently.
Hence, the dominant computational cost of the basis extraction algorithm
lies in applying the matrix $G$ to Gaussian random matrices. We summarize
the basis extraction algorithm in \cref{alg:basis-ext}.

\begin{algorithm}
    \caption{Randomized algorithms for basis extraction}
    \begin{algorithmic}[1]\label{alg:basis-ext}
        \REQUIRE Given an $m\times n$ matrix $G$, a tolerance $\epsilon$
        and a blocksize integer number $b$.

        \ENSURE An approximated basis $Q$ of $U$ for $G = U \Sigma R$ as in
        \cref{eq:gsvd1}.

        \STATE $Q = [~].$

        \FOR{$i = 1$ to $n/b$}

            \STATE Let $\Omega_i$ be a Gaussian random matrix of size $n
            \times b$.

            \STATE Evaluate the projected matrix $Y_i = (I - Q Q^\H) (G
            \Omega_i).$ \label{alg:basis-ext:mat-mat}

            \STATE Compute the reduced QR decomposition $Y_i = P_i T_i$
            for $P_i \in \bbC^{m \times b}$, $T_i \in \bbC^{b \times b}$.

            \STATE Append $P_i$ to $Q$, i.e., $Q = [Q, P_i]$.

            \IF{$\fnorm{(I - Q Q^\H)G} < \epsilon$}

                \STATE Return $Q$.

            \ENDIF

        \ENDFOR

    \end{algorithmic}
\end{algorithm}

Then we aim to calculate GSVs for the compressed matrix pair and obtain
the GSVD of the original matrix pair as in \cref{eq:gsvd1}. We now
consider a scenario that generalized singular values are explicitly
divided into three groups: exactly one, between one and zero, and exactly
zero. Then the GSVD under the tall rectangular version admits,
\begin{equation} \label{eq:gsvd3}
    G_1 =
    \begin{pmatrix}
        U_1 & U_2 & U_3
    \end{pmatrix}
    \begin{pmatrix}
        I & & \\
        & \tSigma_1 & \\
        & & 0 \\
    \end{pmatrix}
    R, \quad
    G_2 =
    \begin{pmatrix}
        V_1 & V_2 & V_3
    \end{pmatrix}
    \begin{pmatrix}
        0 & & \\
        & \tSigma_2 & \\
        & & I \\
    \end{pmatrix}
    R.
\end{equation}
The left bases of $G_1$, $G_2$ are $Q_1$, $Q_2$. In the \cref{eq:gsvd3},
$Q_1$ is the basis of $\begin{pmatrix} U_1 & U_2 \end{pmatrix}$ and
orthogonal to $U_3$, and $Q_2$ is the basis of $\begin{pmatrix}V_2 & V_3
\end{pmatrix}$ and orthogonal to $V_1$. Taking the projection of $G_1$ and
$G_2$ on the basis of $Q_1$ and $Q_2$ respectively, we obtain,
\begin{equation} \label{eq:gsvd-reduced}
    \begin{pmatrix}
        G_1 \\ G_2
    \end{pmatrix}
    =
    \begin{pmatrix}
        Q_1 Q_1^\H G_1 \\
        Q_2 Q_2^\H G_2
    \end{pmatrix}
    =
    \begin{pmatrix}
        Q_1 & \\
        & Q_2
    \end{pmatrix}
    \begin{pmatrix}
        Q_1^\H G_1 \\
        Q_2^\H G_2
    \end{pmatrix}
    =
    \begin{pmatrix}
        Q_1 & \\
        & Q_2
    \end{pmatrix}
    \begin{pmatrix}
        P_1
        \begin{pmatrix}
            I & & 0 \\
            & \tSigma_1 & 0
        \end{pmatrix} R \\
        P_2
        \begin{pmatrix}
            0 & \tSigma_2 & \\
            0 & & I
        \end{pmatrix} R \\
    \end{pmatrix},
\end{equation}
where $P_1 = Q_1^\H \begin{pmatrix} U_1 & U_2 \end{pmatrix}$ and $P_2 =
Q_2^\H \begin{pmatrix} V_2 & V_3 \end{pmatrix}$ are square unitary
matrices. The last equality in \eqref{eq:gsvd-reduced} obeys a general
form of GSVD for $\begin{pmatrix} Q_1^\H G_1 \\ Q_2^\H G_2 \end{pmatrix}$.
Hence the following phase calculates the GSVs for the compressed matrix
pair $Q_1^\H G_1$ and $Q_2^\H G_2$. In the second algorithm, we apply
reduced QR factorization to obtain the column basis of the matrix pair,
and then calculate the GSVs from the basis directly. More precisely, let
$L_1$ and $L_2$ be the top and bottom parts of the partial unitary matrix
of the reduced QR factorization,
i.e.,
\begin{equation*}
    \begin{pmatrix}
    Q_1^\H G_1 \\
    Q_2^\H G_2
    \end{pmatrix}
    =
    \begin{pmatrix}
    L_1 \\
    L_2
    \end{pmatrix}
    \tR.
\end{equation*}
where $L_1 \in \bbC^{l_1 \times n}$, $L_2 \in \bbC^{l_2 \times n}$ forms a
partial unitary matrix, and $\tR \in \bbC^{n \times n}$ is an upper
triangular matrix. The singular values of either $L_1$ or $L_2$ would
reveal the GSVs of our original problem. Hence, we compute the singular
values of the one of $L_1$ and $L_2$ with a smaller matrix size, and then
calculate the GSV pairs. The overall algorithm is summarized in
\cref{alg:rand-gsv}, where the basis extraction
algorithm~(\cref{alg:basis-ext}) is denoted as ``BasisExt'', the tolerance
and blocksize are passed to the function implicitly.

\begin{algorithm}
\caption{Randomized GSVs Algorithm}
\begin{algorithmic}[1] \label{alg:rand-gsv}
    \REQUIRE Given matrix pair $G_1 \in \bbC^{m \times n}$, $G_2 \in
    \bbC^{p \times n}$.

    \ENSURE Generalized singular values $\{\alpha_i\}_{i=1}^n$,
    $\{\beta_i\}_{i=1}^n$.

    \STATE $Q_1 = \mathrm{BasisExt}(G_1)$.

    \STATE $Q_2 = \mathrm{BasisExt}(G_2)$.

    \STATE Compute the reduced QR decomposition
    $
    \begin{pmatrix}
    Q_1^\H G_1 \\
    Q_2^\H G_2
    \end{pmatrix}
    =
    \begin{pmatrix}
    L_1 \\
    L_2
    \end{pmatrix}
    \tR$.

    \STATE Denote numbers of rows of $L_1$ and $L_2$ as $l_1$ and $l_2$
    respectively.

    \IF{$l_1 \leq l_2$}

        \STATE Compute the singular values $L_1$, denoted as
        $\{\alpha_i\}_{i=1}^{l_1}$.

        \STATE Append zeros $\alpha_i = 0$ for $i = l_1+1, \dots,
        n$.

        \STATE Calculate $\beta_i = \sqrt{1 - \alpha_i^2}$ for $i = 1,
        \dots, n$.

    \ELSE

        \STATE Compute the singular values $L_2$ and sort them
        ascendingly, denoted as $\{\beta_i\}_{i=n-l_2+1}^n$.

        \STATE Append zeros $\beta_i = 0$ for $i = 1, \dots, n - l_2$.

        \STATE Calculate $\alpha_i = \sqrt{1 - \beta_i^2}$ for $i = 1,
        \dots, n$.

    \ENDIF

\end{algorithmic}
\end{algorithm}

If we want to recover the GSVD, we can do the following: Let the diagonal
matrices composed of generalized singular values calculated by
\cref{alg:rand-gsv} be $\Sigma_{G_1}$ and $\Sigma_{G_2}$. Let the singular
value decomposition of $L_1$ and $L_2$ be $L_1=U_1\Sigma_{G_1}W_1$ and
$L_2=U_2\Sigma_{G_2}W_2$ respectively. If $l_1\leq l_2$, the matrices of
GSVD of matrix pair $\{G_1,G_2\}$ are $U=Q_1U_1$,
$V=Q_2L_2W_1^{-1}\Sigma_{G_2}^{-1}$ and $R = W_1 \tR$ as in
\cref{eq:gsvd1}. If $l_1> l_2$, the matrices of GSVD of the matrix pair
$\{G_1,G_2\}$ are $U=Q_1L_1W_2^{-1}\Sigma_{G_1}^{-1}$, $V=Q_2V_2$ and $R = W_2
\tR$ as in \cref{eq:gsvd1}.

We now analyze the computational complexities of \cref{alg:basis-ext} and
\cref{alg:rand-gsv}. In \cref{alg:basis-ext}, the most expensive steps are
the matrix-matrix multiplication (line~\ref{alg:basis-ext:mat-mat}). The
computational cost could be estimated as
\begin{equation*}
    \sum_{i = 1}^{l/b} O(m n b) = O(mnl),
\end{equation*}
where $l$ is the number of columns in the output $Q$. The computational
cost of \cref{alg:rand-gsv} could be divided into three parts: basis
extraction, reduced QR factorization, and SVD calculation. The basis
extraction cost is the cost of \cref{alg:basis-ext} applying to $G_1$ and
$G_2$, and admits $O(mnl_1) + O(pnl_2)$. The cost for SVD calculation is
$O(n\min(l_1, l_2)^2)$. The cost for the reduced QR factorization step
composed of two matrix-matrix multiplications and a QR factorization,
\begin{equation*}
    O(mnl_1) + O(pnl_2) + O((l_1+l_2)n^2),
\end{equation*}
which dominates the cost of the other two parts and is the overall cost
for \cref{alg:rand-gsv}.
In contrast, without basis compression, the cost of calculating the GSVs
of the matrix pair $G_1$ and $G_2$ would be dominated by the QR
factorization as that in \cref{alg:rand-gsv}, and admits,
\begin{equation*}
    O((m + p)n^2).
\end{equation*}
Consider a tall rectangular version of GSVD, the costs of GSVs
calculations, with and without basis compression, differ by a ratio of
$\max\{l_1, l_2\}/n$ in the complexity analysis. Further, the leading cost
of \cref{alg:rand-gsv} comes from the matrix-matrix multiplication,
whereas that for GSVs without basis compression comes from the QR
factorization. The extra prefactor difference between matrix-matrix
multiplication and QR factorization is the extra saving for our proposed
algorithm.

\section{Comparative analysis of a class of genome-scale expression data sets}
\label{sec:gsvd-analysis}

For a given matrix $M$, we write $P_{M}$ for the unique orthogonal
projector with $\mathrm{range}(P_{M})=\mathrm{range}(M)$. When $M$ has
full column rank, we can express this projector explicitly
\begin{eqnarray*}\label{p0}
P_{M} = M(M^\H M)^{-1}M^\H.
\end{eqnarray*}

In \cref{alg:basis-ext}, for a matrix $A$, $A(:,i:j)$ denotes the submatrix from the $i$-th column to the $j$-th column in $A$, and $A(k,k)$
denotes the $k$-th diagonal element of $A$.

\begin{lemma}\label{c2} Let $Q_1$ and $\epsilon$ be given by
\cref{alg:basis-ext}, then in step 7 there exist $i$ such that
$\fnorm{Q_1 Q_1^{\H} G_1 - G_1}<\epsilon$, and there exist $j$ such that $\fnorm{Q_2 Q_2^{\H}
G_2 - G_2} < \epsilon$.
\end{lemma}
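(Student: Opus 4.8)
The plan is to show that the stopping test in step~7 must fire no later than the final pass of the loop, so that the claimed index $i$ (and, by the identical argument applied to $G_2$, the index $j$) exists. The engine of the proof is a single range identity: after the $k$-th iteration the accumulated matrix $Q$ satisfies $\mathrm{range}(Q) = \mathrm{range}\big(G_1 [\Omega_1, \dots, \Omega_k]\big)$. First I would record the two structural facts that make $Q$ behave well. Each block $P_i$ is an orthonormal basis of $\mathrm{range}(Y_i)$ obtained from the reduced QR of $Y_i = (I - Q Q^\H)(G_1 \Omega_i)$; since $Y_i$ already lies in the orthogonal complement of the current range of $Q$, the appended columns are orthonormal and orthogonal to all previous ones. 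Hence $Q$ has orthonormal columns at every stage and $Q Q^\H$ is the orthogonal projector $P_{Q}$ onto $\mathrm{range}(Q)$.

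For the range identity I would argue by induction on $k$. The key elementary fact is that for any orthogonal projector $P_S$ onto a subspace $S$ and any matrix $M$ one has $S + \mathrm{range}\big((I - P_S) M\big) = S + \mathrm{range}(M)$, because each column $Mx$ splits as $P_S Mx + (I - P_S)Mx$ with the first summand in $S$ and the second in $\mathrm{range}\big((I-P_S)M\big)$. Applying this with $S = \mathrm{range}(Q_{k-1})$ and $M = G_1 \Omega_k$ gives $\mathrm{range}(Q_k) = \mathrm{range}(Q_{k-1}) + \mathrm{range}(G_1 \Omega_k)$, and the induction closes out to $\mathrm{range}(Q_k) = \mathrm{range}\big(G_1 [\Omega_1, \dots, \Omega_k]\big)$.

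With the identity in hand I would run the loop to its last admissible index, at which point the concatenation $[\Omega_1, \dots]$ is an $n$-row Gaussian matrix with at least $n$ columns, hence of full row rank $n$ almost surely. For such a $\Omega$ one has $\mathrm{range}(G_1 \Omega) = \mathrm{range}(G_1)$, so $Q Q^\H = P_{G_1}$ and the residual collapses: $\fnorm{(I - Q Q^\H) G_1} = \fnorm{(I - P_{G_1}) G_1} = 0 < \epsilon$. Thus the step-7 test is satisfied at that iteration if it has not already been satisfied earlier, which produces the required $i$; repeating the argument verbatim for $G_2$ and $Q_2$ yields $j$.

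The main obstacle I anticipate is not the algebra but pinning down the genericity and rank bookkeeping cleanly. One must ensure the accumulated Gaussian block really attains full row rank $n$ (an almost-sure statement), and one must handle the possibility that some intermediate $Y_i$ is rank-deficient, so that the reduced QR returns an orthonormal basis of a lower-dimensional $\mathrm{range}(Y_i)$; in that event $Q$ grows more slowly, but the range identity above is unaffected and the conclusion at the terminal iteration still holds. I would also note the harmless convention that the loop is run for enough passes that $kb \geq n$ (e.g.\ taking $\lceil n/b \rceil$ iterations), which is all that the full-row-rank argument requires.
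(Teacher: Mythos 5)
Your proposal is correct, and it rests on the same engine as the paper's own proof: run the loop to its final pass, at which point the accumulated test matrix $\Omega = [\Omega_1,\dots,\Omega_{n/b}]$ is an $n\times n$ standard Gaussian matrix, hence invertible almost surely, so that $\mathrm{range}(Q)=\mathrm{range}(G_1)$ and the residual $\fnorm{(I-QQ^\H)G_1}$ is exactly zero, which trivially beats any $\epsilon$. Where you genuinely differ is the inductive mechanism. The paper forms the reduced QR factorization of the full product $G_1\Omega=\tQ R$ once and for all, and shows block by block, invoking \emph{uniqueness} of the reduced QR factorization of a full-column-rank matrix, that the blocks $P_i$ generated by the algorithm coincide with the corresponding blocks $\tQ_i$ of that factorization; it then writes $G_1 = Q_1 R\Omega^{-1}$ to conclude $Q_1Q_1^\H G_1 = G_1$. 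You instead telescope the subspace identity $S + \mathrm{range}\left((I-P_S)M\right) = S + \mathrm{range}(M)$ to obtain $\mathrm{range}(Q_k)=\mathrm{range}\left(G_1[\Omega_1,\dots,\Omega_k]\right)$, never touching QR uniqueness. Your route is somewhat more robust: the paper's argument needs every $Y_i$ to have full column rank $b$ (asserted there without justification), which fails almost surely as soon as $\mathrm{rank}(G_1)<n$ --- precisely the numerically low-rank regime the algorithm targets --- whereas your range identity survives rank-deficient $Y_i$, provided (as you note) the reduced QR is read as returning an orthonormal basis of $\mathrm{range}(Y_i)$; in the paper's defense, when that rank deficiency occurs the residual has already vanished, so the step-7 test fires before the uniqueness argument is ever needed. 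In exchange, the paper's identification $P_i=\tQ_i$ is more concrete: it exhibits the algorithm as an incremental blocked QR of $G_1\Omega$, which your purely subspace-level argument does not. Both proofs are almost-sure statements about Gaussian matrices, a caveat the lemma's wording suppresses in either case.
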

\begin{proof}
Here we only introduce the proof of $G_1$ in detail.
Assume that
$G_1\Omega$ has the reduced QR decomposition
\begin{equation*}
    G_1 \Omega = G_1 (\Omega_1, \cdots, \Omega_{\frac{n}{b}})
    = \tQ R = (\tQ_1, \cdots, \tQ_{\frac{n}{b}}) R
    = (\tQ_1, \cdots, \tQ_{\frac{n}{b}})
    \begin{pmatrix}
        R_{11} & \cdots & R_{1,\frac{n}{b}} \\
        & \ddots & \vdots \\
        &  & R_{\frac{n}{b},\frac{n}{b}} \\
    \end{pmatrix},
\end{equation*}
where $\Omega_i$ is the $n\times b$ submatrix of $\Omega$. When $i=1$,
$Y_1=G_1\Omega_1$, $Y_1$ has the reduced QR decomposition
$Y_1=P_1 T_1$. Due to $Y_1$ being a column full rank matrix, the QR
decomposition of $Y_1$ is unique. Observe that $P_1 = \tQ_1$,
$T_1=R_{11}$. When $i=2$, $Y_2 = G_1 \Omega_2 - \tQ_1
\tQ_1^{\H} G_1 \Omega_2$ has the reduced QR decomposition $Y_2 =
P_2 T_2$. For $G_1 (\Omega_1, \Omega_2) = (\tQ_1, \tQ_2)
\begin{pmatrix}
    R_{11} & R_{12} \\
    0 & R_{22} \\
\end{pmatrix}$.
It follows that
\begin{equation*}
    G_1 \Omega_1 = \tQ_1 R_{11}, G_1 \Omega_2 = \tQ_1 R_{12} + \tQ_2 R_{22}
\end{equation*}
and
\begin{equation*}
\begin{aligned}
\begin{pmatrix}
    R_{11} & R_{12} \\
    0 & R_{22} \\
\end{pmatrix}
&=
\begin{pmatrix}
    \tQ_1^\H \\
    \tQ_2^\H \\
\end{pmatrix}
\begin{pmatrix}
    \tQ_1 & \tQ_2 \\
\end{pmatrix}
\begin{pmatrix}
    R_{11} & R_{12} \\
    0 & R_{22} \\
\end{pmatrix}
=
\begin{pmatrix}
    \tQ_1^\H \\
    \tQ_2^\H \\
\end{pmatrix}
\begin{pmatrix}
    G_1\Omega_1 & G_1\Omega_2 \\
\end{pmatrix} \\
& =
\begin{pmatrix}
    \tQ_1^\H G_1\Omega_1 & \tQ_1^\H G_1 \Omega_2 \\
    \tQ_2^\H G_1\Omega_1 & \tQ_2^\H G_1 \Omega_2 \\
\end{pmatrix}.
\end{aligned}
\end{equation*}
Observe that $\tQ_2 R_{22} = G_1 \Omega_2 - \tQ_1 R_{12} = G_1 \Omega_2 -
\tQ_1 \tQ_1^{\H} G_1 \Omega_2$. Since $Y_2$ is a column full rank matrix,
the QR decomposition of $Y_2$ is unique. So $P_2 = \tQ_2$, $T_2 = R_{22}$.
The same is true when $i > 2$. Therefore, $P_i = \tQ_i, 1 \leq i \leq
\frac{n}{b}$.

By \cref{alg:basis-ext} if $i = \frac{n}{b}$, then $Q_1 = [P_1, \ldots,
P_{\frac{n}{b}}] = [\tQ_1, \cdots, \tQ_{\frac{n}{b}}]$. Since
$\Omega$ is an $n\times n$ standard Gaussian matrix with
$\rank{\Omega}=n$, then by $G_1 \Omega = (\tQ_1, \cdots,
\tQ_{\frac{n}{b}}) R = Q_1 R$ we have $G_1 = Q_1 R \Omega^{-1}$.
Hence, $Q_1 Q_1^{\H} G_1 = Q_1 Q_1^{\H} Q_1 R \Omega^{-1} = Q_1 R
\Omega^{-1} = G_1$. Then there exist $Q_1$ such that for precision
$\epsilon$ we have $\fnorm{Q_1 Q_1^{\H} G_1 - G_1}<\epsilon$.
\end{proof}


Next, we will analyze the accuracy of the basis extraction.

\begin{proposition}[Proposition 10.1 \cite{hmt11}] \label{prop:sgt}
    Fix matrices $S$, $T$, and draw a standard Gaussian matrix $G$. Then
    \[\bbE\fnorm{SGT}^2 = \fnorm{S}\fnorm{T}.\]
\end{proposition}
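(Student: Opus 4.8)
The plan is to prove the identity by expanding the Frobenius norm into a sum over matrix entries and invoking the second-moment structure of a Gaussian matrix; note first that replacing $S$ by $cS$ scales the left-hand side by $\lvert c\rvert^2$, so the correct right-hand side is homogeneous of degree two in each factor, i.e. $\bbE\fnorm{SGT}^2=\fnorm{S}^2\fnorm{T}^2$, which is what I will establish. Write $S\in\bbC^{a\times m}$, $G\in\bbC^{m\times n}$, and $T\in\bbC^{n\times b}$, so that $(SGT)_{pq}=\sum_{i,j}S_{pi}G_{ij}T_{jq}$.

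First I would write $\fnorm{SGT}^2=\sum_{p,q}\lvert(SGT)_{pq}\rvert^2$ and expand each summand as a quadruple sum over the index pairs $(i,j)$ and $(k,l)$, producing terms $S_{pi}T_{jq}\overline{S_{pk}T_{lq}}\,G_{ij}\overline{G_{kl}}$. Next, using linearity of expectation (all sums are finite) I would pass $\bbE$ inside, so that the only random factor $G_{ij}\overline{G_{kl}}$ gets replaced by $\bbE[G_{ij}\overline{G_{kl}}]=\delta_{ik}\delta_{jl}$, since the entries of $G$ are independent, mean zero, and of unit second moment. This collapses the quadruple sum onto its diagonal $i=k,\ j=l$, leaving $\bbE\fnorm{SGT}^2=\sum_{p,q}\sum_{i,j}\lvert S_{pi}\rvert^2\lvert T_{jq}\rvert^2$. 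Finally, factoring the doubly-indexed sum gives $\big(\sum_{p,i}\lvert S_{pi}\rvert^2\big)\big(\sum_{j,q}\lvert T_{jq}\rvert^2\big)=\fnorm{S}^2\fnorm{T}^2$.

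A more structural alternative I would keep in reserve is to vectorize: since $\mathrm{vec}(SGT)=(T^{\top}\otimes S)\,\mathrm{vec}(G)$ and $\mathrm{vec}(G)$ is a standard Gaussian vector, I could apply the elementary fact $\bbE\norm{Ax}^2=\fnorm{A}^2$ for standard Gaussian $x$ (which follows from $\bbE\norm{Ax}^2=\tr{A\,\bbE[xx^\H]\,A^\H}$ and $\bbE[xx^\H]=I$) together with the Kronecker identity $\fnorm{T^{\top}\otimes S}=\fnorm{T}\,\fnorm{S}$ to reach the same conclusion in one line.

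I do not expect a genuine obstacle here; the one place needing care is the moment bookkeeping that kills the off-diagonal terms, namely verifying $\bbE[G_{ij}\overline{G_{kl}}]=\delta_{ik}\delta_{jl}$ in whichever normalization is in force. In the real case this reads $\bbE[G_{ij}G_{kl}]=\delta_{ik}\delta_{jl}$ with no conjugates, while in the complex case one uses the standard normalization $\bbE[\lvert G_{ij}\rvert^2]=1$; either way independence of distinct entries forces every cross term to vanish, and the interchange of expectation and summation is immediate from finiteness.
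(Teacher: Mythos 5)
Your proof is correct, and there is in fact nothing in the paper to compare it against: the paper quotes this statement from \cite{hmt11} (Proposition 10.1 there) as an imported result and supplies no proof of its own, so the relevant comparison is with the original source, whose argument is the same second-moment calculation you give. Your entrywise expansion is complete and standard: expand $\fnorm{SGT}^2$ into a quadruple sum, exchange the finite sum with $\bbE$, and use independence together with $\bbE[G_{ij}\overline{G_{kl}}]=\delta_{ik}\delta_{jl}$ to collapse to the diagonal, giving $\fnorm{S}^2\fnorm{T}^2$; the vectorization route via $\mathrm{vec}(SGT)=(T^{\top}\otimes S)\,\mathrm{vec}(G)$ is an equally valid one-liner. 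The most valuable point in your write-up is the typo you caught: as printed, the identity $\bbE\fnorm{SGT}^2=\fnorm{S}\fnorm{T}$ cannot hold, since the two sides scale differently under $S\mapsto cS$, and the correct right-hand side is $\fnorm{S}^2\fnorm{T}^2$ (equivalently, $(\bbE\fnorm{SGT}^2)^{1/2}=\fnorm{S}\fnorm{T}$, which is how \cite{hmt11} states it). Your corrected form is also the one the paper actually relies on: in the proof of \cref{thm:projector}, the conditional expectation $\bbE\bigl[\fnorm{\hat{\Sigma}_b\tLambda\Lambda^{\dagger}}^2\,\big|\,\Lambda\bigr]$ is replaced by $\fnorm{\hat{\Sigma}_b}^2\fnorm{\Lambda^{\dagger}}^2$, i.e., the squared version, so the statement as printed should be amended to match.
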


\begin{proposition}[Proposition 10.2 \cite{hmt11}] \label{prop:t1}
    Draw a $k \times (k+p)$ standard Gaussian matrix $G$ with $k\geq2$ and
    $p\geq2$. Then
    \[\bbE \fnorm{G^{\dag}}^2 = \frac{k}{p-1}.\]
\end{proposition}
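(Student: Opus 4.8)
The plan is to reduce the computation of $\bbE\fnorm{G^{\dagger}}^2$ to the expectation of a reciprocal chi-squared variable. First I would simplify the pseudoinverse. Since $G$ is a wide $k\times(k+p)$ matrix, it has full row rank almost surely, so $G^{\dagger} = G^\H(GG^\H)^{-1}$ and consequently $(G^{\dagger})^\H G^{\dagger} = (GG^\H)^{-1}$. Writing $W = GG^\H$ for the resulting $k\times k$ (real) Wishart matrix and taking the trace, I obtain
\[
    \fnorm{G^{\dagger}}^2 = \tr{(G^{\dagger})^\H G^{\dagger}} = \tr{W^{-1}},
\]
so it suffices to evaluate $\bbE\tr{W^{-1}}$, the expected trace of the inverse Wishart matrix.

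Next I would exploit the row-permutation symmetry of $G$: permuting the rows of $G$ conjugates $W$ by a permutation matrix and hence permutes the diagonal entries of $W^{-1}$, so all diagonal entries of $\bbE[W^{-1}]$ share a common value and $\bbE\tr{W^{-1}} = k\,\bbE[(W^{-1})_{11}]$. To isolate a single diagonal entry I would partition $G = \begin{pmatrix} g^\H \\ G_2 \end{pmatrix}$, where $g \in \bbR^{k+p}$ is the first row and $G_2 \in \bbR^{(k-1)\times(k+p)}$ collects the remaining $k-1$ rows. The Schur-complement formula for the top-left entry of the inverse then gives
\[
    (W^{-1})_{11} = \frac{1}{g^\H(I - P)g}, \qquad P = G_2^\H(G_2 G_2^\H)^{-1}G_2,
\]
where $P$ is the orthogonal projector onto the row space of $G_2$.

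Finally I would condition on $G_2$. The projector $I-P$ has rank $(k+p)-(k-1) = p+1$ and is independent of the row $g$, which is a standard Gaussian vector; hence, conditionally on $G_2$, the quadratic form $g^\H(I-P)g$ is distributed as $\chi^2_{p+1}$, a law that does not depend on $G_2$. Applying the reciprocal-moment identity $\bbE[1/\chi^2_m] = 1/(m-2)$ with $m = p+1$ yields $\bbE[(W^{-1})_{11}] = 1/(p-1)$, and multiplying by $k$ gives the claim. The main obstacle is the conditioning step: one must verify that $I-P$ really is a rank-$(p+1)$ orthogonal projector that is independent of $g$, so that $g^\H(I-P)g$ is exactly $\chi^2_{p+1}$, and one must ensure the reciprocal chi-squared moment is finite; this finiteness is precisely where the hypothesis $p \geq 2$ (equivalently $m = p+1 \geq 3$) enters. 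Everything else is routine linear algebra together with standard Gaussian distributional facts, and the preceding proposition on $\bbE\fnorm{SGT}^2$ is not needed here.
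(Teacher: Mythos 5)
Your proof is correct, but there is nothing in the paper to compare it against: the paper states this result verbatim as Proposition 10.2 of \cite{hmt11} and offers no proof of its own. Even the cited source essentially stops where your first paragraph does---reducing $\fnorm{G^{\dagger}}^2$ to $\tr{(GG^\H)^{-1}}$, the trace of an inverted Wishart matrix---and then quotes the expected value of that trace from the multivariate-statistics literature. Your argument goes further and is fully self-contained: the permutation-symmetry reduction to a single diagonal entry, the Schur-complement identity $(W^{-1})_{11} = 1/\left(g^\H(I-P)g\right)$, the observation that $I-P$ is a rank-$(p+1)$ orthogonal projector independent of $g$ (so the quadratic form is exactly $\chi^2_{p+1}$ conditionally on $G_2$, hence unconditionally), and the reciprocal moment $\bbE\left[1/\chi^2_m\right] = 1/(m-2)$ are all sound, and you correctly locate the role of the hypothesis $p \geq 2$ as the finiteness condition $m = p+1 \geq 3$. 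What your self-contained route buys is transparency about where the constant comes from; in particular, it makes visible that this is a \emph{real}-Gaussian fact resting on chi-squared reciprocal moments, which is worth noting because the paper later invokes the proposition inside the proof of \cref{thm:projector} for complex matrices, where the complex-Wishart analogue of the same computation would yield $k/p$ rather than $k/(p-1)$. The citation route taken by the paper buys only brevity.
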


\begin{theorem}[Theorem 3.3.16 \cite{hj91}] \label{thm:singular}
    Let $A, B \in \mathbb{C}^{m\times n}$ be given. The following
    inequalities hold for the decreasingly ordered singular values of $A$,
    $B$ and $AB^{\H}$.
    \begin{equation*}
        \sigma_i(AB^{\H}) \leq \sigma_i(A) \sigma_1(B), \quad i = 1, 2,
        \ldots, \min\{m, n\}.
    \end{equation*}
\end{theorem}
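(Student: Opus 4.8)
The plan is to obtain the stated bound as a special case of the general submultiplicativity of singular values. First I would note that a matrix and its conjugate transpose share the same singular values, so $\sigma_1(B) = \sigma_1(B^\H) = \norm{B^\H}_2$. It therefore suffices to prove the general inequality $\sigma_i(CD) \le \sigma_i(C)\,\sigma_1(D)$ for arbitrary conformable matrices $C,D$ and then specialize to $C = A$, $D = B^\H$, which gives $\sigma_i(AB^\H) \le \sigma_i(A)\,\sigma_1(B^\H) = \sigma_i(A)\,\sigma_1(B)$. For indices $i > \operatorname{rank}(A)$ both sides vanish, so restricting to $1 \le i \le \min\{m,n\}$ causes no difficulty.

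The key tool I would invoke is the Eckart--Young--Mirsky variational characterization of singular values: for any matrix $M$,
\[
    \sigma_i(M) = \min\bigl\{\norm{M - F}_2 : \operatorname{rank}(F) \le i-1\bigr\}.
\]
Granting this, the argument is one line. Let $F^\star$ be a best rank-$(i-1)$ approximation of $C$ in the spectral norm, so that $\norm{C - F^\star}_2 = \sigma_i(C)$. Then $F^\star D$ has rank at most $i-1$, and since $\sigma_i(CD)$ is the minimal spectral-norm distance from $CD$ to the rank-$(i-1)$ matrices,
\[
    \sigma_i(CD) \le \norm{CD - F^\star D}_2 = \norm{(C - F^\star)D}_2 \le \norm{C - F^\star}_2\,\norm{D}_2 = \sigma_i(C)\,\sigma_1(D),
\]
where the middle inequality is submultiplicativity of the spectral norm. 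Specializing yields the claim.

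The main obstacle is really just securing the variational characterization I relied on; everything above collapses to a single estimate once it is in hand. If I wanted a fully self-contained argument I would instead use the Courant--Fischer min--max form $\sigma_i(M) = \min_{\dim S = i-1}\ \max_{x \perp S,\ \norm{x}=1}\norm{Mx}_2$. Taking $S_0$ to be an optimal $(i-1)$-dimensional subspace for $C$ and setting $S = D^\H S_0$ in the domain of $D$, every unit vector $x \perp S$ satisfies $Dx \perp S_0$, whence $\norm{CDx}_2 \le \sigma_i(C)\,\norm{Dx}_2 \le \sigma_i(C)\,\sigma_1(D)$; minimizing over $S$ then gives the bound. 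The one point needing care in this route is that $\dim(D^\H S_0)$ may be strictly smaller than $i-1$, which I would resolve by enlarging $S$ to dimension exactly $i-1$, an operation that only shrinks the feasible set of $x$ and hence can only decrease the inner maximum, so the min--max bound is preserved.
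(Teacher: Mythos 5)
You should know at the outset that the paper does not prove this statement at all: it is quoted as Theorem 3.3.16 of Horn and Johnson \cite{hj91} and used as a black box inside the proof of the projection error bound (Theorem \ref{thm:projector}). So there is no paper proof to compare against; the only question is whether your blind argument is sound, and it is. Your reduction to the general inequality $\sigma_i(CD) \leq \sigma_i(C)\,\sigma_1(D)$ with $C = A$, $D = B^{\H}$ is legitimate since $\sigma_1(B^{\H}) = \sigma_1(B)$, and the main estimate is correct: if $F^{\star}$ is a best rank-$(i-1)$ spectral-norm approximation of $C$, then $\operatorname{rank}(F^{\star}D) \leq i-1$, so the Eckart--Young--Mirsky characterization gives
\begin{equation*}
    \sigma_i(CD) \leq \norm{CD - F^{\star}D}_2 \leq \norm{C - F^{\star}}_2 \norm{D}_2 = \sigma_i(C)\,\sigma_1(D),
\end{equation*}
and the index bookkeeping is consistent ($AB^{\H}$ is $m \times m$, so $\sigma_i(AB^{\H})$ is defined for every $i \leq \min\{m,n\} \leq m$, and the degenerate case $i-1 \geq \operatorname{rank}(C)$ is absorbed automatically since then $F^{\star} = C$ and both sides vanish). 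Your Courant--Fischer fallback is also handled correctly, including the one genuine subtlety: when $\dim(D^{\H}S_0) < i-1$ you enlarge the subspace to dimension exactly $i-1$, which only shrinks the set of admissible unit vectors $x$ and therefore preserves the upper bound on the inner maximum. For context, your argument is precisely the standard proof of the stronger Horn--Johnson inequality $\sigma_{i+j-1}(AB) \leq \sigma_i(A)\,\sigma_j(B)$ specialized to $j = 1$, which is the only case the paper ever invokes, so nothing is lost by the specialization.
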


\begin{theorem} \label{thm:projector}
    Let $G_1 \in \bbC^{m \times n}$ and $G_2 \in \bbC^{p \times n}$ satisfy
    \cref{eq:gsvd1}, the target ranks $k_1, k_2 \geq 2$ and the
    oversampling parameters $p_1, p_2 \geq 2$ obey $k_1 + p_1 \leq
    \min\{m, n\}$, $k_2 + p_2 \leq \min\{p, n\}$, and $\Omega_1 \in
    \bbC^{n \times (k_1 + p_1)}$ and $\Omega_2 \in \bbC^{n \times (k_2 +
    p_2)}$ be standard Gaussian matrices. Denote $\varphi_i$ and $\chi_i$
    as GSVs of $\{G_1, G_2\}$. For $Q_1 \in \bbC^{m \times (k_1 + p_1)}$
    and $Q_2 \in \bbC^{m \times (k_2 + p_2)}$ calculated by
    \cref{alg:basis-ext}, we have
    \begin{equation*}
        \bbE \fnorm{\left(I_m - Q_1 Q_1^{\H} \right) G_1}^2
        \leq \eta \left(\frac{k_1}{p_1 - 1} + 1\right)
        \sum_{j > k_1}^n \varphi^2_j, \quad \text{and}
    \end{equation*}
    \begin{equation*}
        \bbE \fnorm{\left(I_p - Q_2 Q_2^{\H} \right) G_2}^2
        \leq \eta \left(\frac{k_2}{p_2 - 1} + 1\right)
        \sum_{j > k_2}^n \chi^2_{n-j+1},
    \end{equation*}
    where $\eta = \sigma_{\max}(G_1^{\H} G_1 + G_2^{\H} G_2)$.
\end{theorem}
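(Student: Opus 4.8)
The plan is to reduce the statement to the classical Frobenius-norm error bound for the randomized range finder applied to $G_1$ (and separately to $G_2$), and then to translate the ordinary singular values of $G_1$ into the generalized singular values $\varphi_j$ together with the factor $\eta$. First I would record that the matrix $Q_1$ produced by \cref{alg:basis-ext} is an orthonormal basis for $\mathrm{range}(G_1\Omega_1)$: by the argument in \cref{c2} the accumulated blocks satisfy $P_i = \tQ_i$, so whether $\Omega_1 \in \bbC^{n\times(k_1+p_1)}$ is used as one block or as several concatenated Gaussian blocks of total width $k_1+p_1$, the columns of $Q_1$ span the same space as $G_1\Omega_1$, and hence $Q_1 Q_1^{\H} = P_{G_1\Omega_1}$ is the orthogonal projector onto that range. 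Thus $\fnorm{(I_m - Q_1 Q_1^{\H})G_1}^2$ is exactly the error of the range finder with test matrix $\Omega_1$.

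Second, I would run the standard two-stage expectation analysis using the stated propositions. Writing the SVD $G_1 = \hat U \hat\Sigma \hat V^{\H}$ and partitioning $\hat V = (\hat V_1 \ \hat V_2)$ so that $\hat V_1$ carries the top $k_1$ right singular vectors, set $\hat\Omega_1 = \hat V_1^{\H}\Omega_1$ and $\hat\Omega_2 = \hat V_2^{\H}\Omega_1$; by unitary invariance of the Gaussian law these are independent Gaussians with $\hat\Omega_1$ of size $k_1\times(k_1+p_1)$. Invoking the standard deterministic range-finder bound \cite{hmt11},
\[
\fnorm{(I_m - Q_1 Q_1^{\H})G_1}^2 \le \fnorm{\hat\Sigma_2}^2 + \fnorm{\hat\Sigma_2\hat\Omega_2\hat\Omega_1^{\dag}}^2 ,
\]
I would take expectations in two stages: conditioning on $\hat\Omega_1$ and applying \cref{prop:sgt} with $S = \hat\Sigma_2$, $T = \hat\Omega_1^{\dag}$ gives $\bbE\fnorm{\hat\Sigma_2\hat\Omega_2\hat\Omega_1^{\dag}}^2 = \fnorm{\hat\Sigma_2}^2\,\bbE\fnorm{\hat\Omega_1^{\dag}}^2$, and then \cref{prop:t1} yields $\bbE\fnorm{\hat\Omega_1^{\dag}}^2 = k_1/(p_1-1)$. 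Combining, $\bbE\fnorm{(I_m-Q_1Q_1^{\H})G_1}^2 \le \left(1 + k_1/(p_1-1)\right)\sum_{j>k_1}^n \sigma_j(G_1)^2$.

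Third — the genuinely GSVD-specific part — I would convert $\sum_{j>k_1}^n \sigma_j(G_1)^2$ into $\eta\sum_{j>k_1}^n \varphi_j^2$. Since $U$ is column-orthonormal, $\sigma_j(G_1) = \sigma_j(\Sigma_{G_1}R)$; writing $\Sigma_{G_1}R = \Sigma_{G_1}(R^{\H})^{\H}$ and invoking \cref{thm:singular} gives $\sigma_j(G_1) \le \sigma_j(\Sigma_{G_1})\,\sigma_1(R) = \varphi_j\,\sigma_1(R)$, where I use that the $\alpha_i$ are ordered decreasingly so the $j$-th largest singular value of the diagonal $\Sigma_{G_1}$ is $\varphi_j$. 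The factor $\eta$ then comes from the normalization $\alpha_i^2+\beta_i^2=1$: because $U^{\H}U = V^{\H}V = I_n$, one has $G_1^{\H}G_1 + G_2^{\H}G_2 = R^{\H}(\Sigma_{G_1}^2 + \Sigma_{G_2}^2)R = R^{\H}R$, so $\eta = \sigma_{\max}(G_1^{\H}G_1 + G_2^{\H}G_2) = \sigma_1(R)^2$. Substituting $\sigma_j(G_1)^2 \le \eta\,\varphi_j^2$ into the bound of the previous paragraph gives the first inequality. The bound for $G_2$ is identical, except that the $\beta_i$ are ordered increasingly, so the $j$-th largest singular value of $\Sigma_{G_2}$ is $\chi_{n-j+1}$, which is exactly why its tail sum reads $\sum_{j>k_2}^n \chi_{n-j+1}^2$.

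The main obstacle I anticipate is the clean identification $\eta = \sigma_1(R)^2$ and keeping the ordering conventions straight: the theorem's tail sums are indexed by the generalized singular values rather than the ordinary singular values of $G_1$ and $G_2$, and the reversal of ordering between the $\alpha$'s and $\beta$'s must be tracked carefully so that the index shift $j\mapsto n-j+1$ appears only in the $G_2$ bound. A secondary point to verify is that the batched accumulation in \cref{alg:basis-ext} really produces the same range as a single Gaussian block of width $k_1+p_1$, so that the single-shot deterministic bound applies; this is precisely what \cref{c2} supplies.
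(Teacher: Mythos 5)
Your proposal is correct and follows essentially the same route as the paper's proof: the paper also reduces to the SVD of $G_1$, derives the deterministic range-finder bound (re-proving it via an auxiliary matrix $Z$ rather than citing it), applies \cref{prop:sgt} and \cref{prop:t1} in the same two-stage conditioning, and converts ordinary singular values to GSVs via \cref{thm:singular} together with the identity $G_1^{\H}G_1 + G_2^{\H}G_2 = R^{\H}R$, which is exactly your $\eta = \sigma_1(R)^2$ step. Your treatment of the batched accumulation (via \cref{c2}) and of the index reversal $j \mapsto n-j+1$ for $G_2$ is in fact more explicit than the paper's, which handles both points implicitly.
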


\begin{proof}
Throughout this proof, we focus on the analysis of $G_1$ and $Q_1$ and
omit the subscript for simplicity. We inherit the GSVD of $\{G_1, G_2\}$
as in \cref{eq:gsvd1}. Let the SVD of $G$ be $G = \hat{U} \hat{\Sigma}
\hat{V}^{\H}$. Then, we have,
\begin{equation*}
    G (G_1^{\H} G_1 + G_2^{\H} G_2)^{-\frac{1}{2}} = \hat{U} \hat{\Sigma}
    \hat{V}^{\H} (G_1^{\H} G_1 + G_2^{\H} G_2)^{-\frac{1}{2}}
    = U \Sigma_{G} W,
\end{equation*}
where $W = R (R^{\H} R)^{-\frac{1}{2}}$ is a unitary matrix. Rewriting
$\hat{\Sigma}$ in terms of $\Sigma_G$, we obtain,
\begin{equation*}
    \hat{\Sigma} = \hat{U}^{\H} U \Sigma_{G} W (G_1^{\H} G_1 + G_2^{\H}
    G_2)^{\frac{1}{2}} \hat{V}.
\end{equation*}
The SVD of $G$ could be rewritten as top and bottom parts,
\begin{equation}
    G = \hat{U} \hat{\Sigma} \hat{V}^{\H} = \hat{U}
    \begin{pmatrix}
        \hat{\Sigma}_t & \\
        & \hat{\Sigma}_b \\
    \end{pmatrix}
    \begin{pmatrix}
        \hat{V}_t^{\H} \\
        \hat{V}_b^{\H} \\
    \end{pmatrix},
\end{equation}
where $\hat{\Sigma}_t \in \bbR^{k \times k}$ is a diagonal matrix with the
largest $k$ GSVs of $G$ on the diagonal, $\hat{\Sigma}_b \in \bbR^{(m-k)
\times (n-k)}$ is a diagonal matrix with the rest GSVs on the diagonal,
$\hat{V}_t \in \bbC^{n \times k}$ and $\hat{V}_b \in \bbC^{n \times
(n-k)}$ form a compatible top-bottom partition of $\hat{V}$.

By the unitary invariant property of the Frobinius norm, we have,
\begin{equation*}
\begin{split}
    \fnorm{ \left(I - QQ^{\H} \right) G}
    &= \fnorm{ \left(I - \hat{U}^\H P_{Q} \hat{U} \right) \hat{U}^\H G}\\
    &= \fnorm{ \left(I - \hat{U}^\H P_{G\Omega} \hat{U} \right) \hat{U}^\H G}
    = \fnorm{ \left(I - P_{\hat{U}^\H G\Omega} \right) \hat{U}^\H G},
\end{split}
\end{equation*}
where $P_{\hat{U}^\H G \Omega}$ denotes the projector formed by $\hat{U}^\H
G\Omega$. We further construct an approximated basis of $\hat{U}^\H
G\Omega$ as,
\begin{equation*}
    Z = \hat{U}^\H G \Omega \Lambda^\dagger \hat{\Sigma}_t^{-1} =
    \begin{pmatrix}
        I \\ F
    \end{pmatrix},
\end{equation*}
where
\begin{equation*}
    \Lambda = \hat{V}_t^{\H} \Omega, \quad
    F = \hat{\Sigma}_{b} \tLambda \Lambda^\dagger
    \hat{\Sigma}_t^{-1}, \quad \text{and }
    \tLambda = \hat{V}_b^{\H} \Omega.
\end{equation*}

From the expression of $Z$, we have $\mathrm{range}(Z) \subset
\mathrm{range}(\hat{U}^\H G\Omega)$ and, hence, obtain
\begin{equation} \label{eq:projGbound}
    \fnorm{\left(I - Q Q^\H \right) G}
    = \fnorm{ \left(I - P_{\hat{U}^\H G\Omega} \right) \hat{U}^\H G}
    \leq \fnorm{ \left(I - P_Z \right) \hat{U}^\H G}.
\end{equation}
The projector $I - P_Z$ could be explicitly written in terms of $F$,
\begin{equation*}
    I - P_{Z} =
    \begin{pmatrix}
        I - (I + F^{\H}F)^{-1} & B\\
        B^\H & I - F (I + F^{\H} F)^{-1} F^{\H}
    \end{pmatrix},
\end{equation*}
where $B = -(I + F^{\H} F)^{-1} F^{\H}$. Since
\begin{equation*}
    I - (I + F^{\H}F)^{-1} \preceq F^{\H} F, \quad \text{and }
    I - F(I + F^{\H}F)^{-1} F^{\H} \preceq I,
\end{equation*}
we could give an upper bound for the projector $I - P_Z$,
\begin{equation*}
    I - P_Z \preceq
    \begin{pmatrix}
    F^{\H}F & B\\
    B^{\H} & I
    \end{pmatrix}.
\end{equation*}
Then substituting the upper bound of $I - P_Z$ into \cref{eq:projGbound},
we have
\begin{equation*}
    \begin{split}
        \fnorm{ \left(I - QQ^{\H}\right)G}^2 &
        \leq \fnorm{ (I - P_{Z}) \hat{\Sigma} \hat{V}^{\H}}^2
        = \fnorm{(I - P_{Z}) \hat{\Sigma}}^2\\
        & \leq  \tr{
        \begin{pmatrix}
            \hat{\Sigma}_{t} &  \\
            & \hat{\Sigma}_{b} \\
        \end{pmatrix}
        \begin{pmatrix}
            F^{\H}F & B\\
            B^{\H} & I
        \end{pmatrix}
          \begin{pmatrix}
            \hat{\Sigma}_{t} &  \\
            & \hat{\Sigma}_{b} \\
        \end{pmatrix}}
        =
        \fnorm{F \hat{\Sigma}_{t}}^2 + \fnorm{\hat{\Sigma}_{b}}^2.
    \end{split}
\end{equation*}
Taking the expectation with
respect to the randomness in $\Omega$, we prove the inequality for $G =
G_1$,
\begin{equation*}
    \begin{split}
        \bbE \fnorm{ \left(I - QQ^{\H}\right)G}^2
        \leq  \bbE \fnorm{\hat{\Sigma}_{b} \tLambda \Lambda^{\dag}}^2
        + \fnorm{\hat{\Sigma}_{b}}^2,
    \end{split}
\end{equation*}
where the definition of $F$ is substituted. By the definition of $\Lambda$
and $\tLambda$, they are the top and bottom parts of the unitary matrix
$\hat{V}$ applied to the standard Gaussian matrix $\Omega$. Due to the
property of the standard Gaussian matrix, we know that $\Lambda$ and
$\tLambda$ are independent. Hence, we compute this expectation by first
conditioning on $\Lambda$ and then computing the expectation with respect
to $\Lambda$,
\begin{equation*}
    \bbE \fnorm{\hat{\Sigma}_b \tLambda \Lambda^{\dag}}^2
    = \bbE \left(\bbE \left[\fnorm{\hat{\Sigma}_b \tLambda
    \Lambda^{\dag}}^2 \middle| \Lambda \right]\right)
    = \bbE \left( \fnorm{\hat{\Sigma}_b}^2
    \fnorm{\Lambda^{\dag}}^2 \right)
    = \frac{k}{p-1} \cdot \fnorm{\hat{\Sigma}_b}^2,
\end{equation*}
where the second equality is due to \cref{prop:sgt} and the last equality
is due to \cref{prop:t1}.

Combined with singular value inequality~\cref{thm:singular}, we prove the
first expectation inequality in the theorem,
\begin{equation*}
    \bbE\fnorm{(I-QQ^{\H})G}^2 \leq \left(1+\frac{k}{p-1}\right)
    \fnorm{\hat{\Sigma}_b}^2
    \leq \eta\left(1+\frac{k}{p-1}\right) \sum_{j>k}\varphi_{j}^2,
\end{equation*}
where $\eta = \sigma_{\max}(R^{\H}R) = \sigma_{\max}(G_1^{\H} G_1 +
G_2^{\H} G_2)$. The second expectation inequality in the theorem could be
proved similarly.
\end{proof}

In the following, we estimate the numerical errors in comparative analysis
quantities when the generalized singular values are perturbed. We stick to
the following notations,
\begin{eqnarray*}
    G =
    \begin{pmatrix}
        G_1 \\
        G_2 \\
    \end{pmatrix},
    \quad
    \tG = G + \Delta G = G +
    \begin{pmatrix}
        \Delta G_1 \\
        \Delta G_2\\
    \end{pmatrix}
    =
    \begin{pmatrix}
        \tG_1 \\
        \tG_2 \\
    \end{pmatrix}.
\end{eqnarray*}
The generalized singular value pairs $(\varphi_i, \chi_i)$ of $\{G_1,
G_2\}$ and those $(\tvarphi_i, \tchi_i)$ of $\{\tG_1, \tG_2\}$ be ordered
as in \cref{ab}. The errors between between $\varphi_\nu, \chi_\nu$
and $\tvarphi_\nu, \tchi_\nu$ are denoted as $\Delta \varphi_\nu$,
$\Delta \chi_\nu$ respectively.

From \cref{thm:projector}, we know that the randomized GSVs algorithm
could produce fairly accurate $\tG_1$ and $\tG_2$ for a small tolerance
$\epsilon$ and relatively large $k_1$ and $k_2$, hence, small $\Delta G$.
We introduce a new notation $\calE$ as,
\begin{equation*}
    \calE = \sqrt{2} \fnorm{\Delta G}
    \min \left\{ \norm{G^{\dagger}}, \norm{\tG^{\dagger}} \right\},
\end{equation*}
which will be used to bound the numerical errors for both GSVs and
comparative analysis quantities.

\begin{lemma} \cite{p84} \label{lem:per}
    Assume $\rank{G} = \rank{\tG} = n$. Then
    \begin{equation*}
        \sqrt{\sum_{i=1}^n \left[(\varphi_i - \tvarphi_i)^2 + (\chi_i -
        \tchi_i)^2\right]} \leq \calE.
    \end{equation*}
\end{lemma}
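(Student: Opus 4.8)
The plan is to reduce the claim to a perturbation estimate for the orthogonal projectors $P_G$ and $P_{\tG}$ onto the ranges of the stacked matrices. First I would record the link between the generalized singular values and an orthonormal basis of $\mathrm{range}(G)$. From \cref{eq:gsvd1},
\begin{equation*}
G = \begin{pmatrix} G_1 \\ G_2 \end{pmatrix} = \begin{pmatrix} U \Sigma_{G_1} \\ V \Sigma_{G_2} \end{pmatrix} R,
\end{equation*}
and since $\alpha_i^2 + \beta_i^2 = 1$ while $U, V$ have orthonormal columns, the matrix $\begin{pmatrix} U\Sigma_{G_1} \\ V\Sigma_{G_2}\end{pmatrix}$ already has orthonormal columns; hence it is an orthonormal basis of $\mathrm{range}(G)$. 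Consequently, for any orthonormal basis $\Phi = \begin{pmatrix}\Phi_1 \\ \Phi_2\end{pmatrix}$ of $\mathrm{range}(G)$, partitioned conformally with $G$, the singular values of $\Phi_1$ are exactly $\{\varphi_i\}$ and those of $\Phi_2$ are $\{\chi_i\}$; likewise $\{\tvarphi_i\}$ and $\{\tchi_i\}$ are the singular values of the blocks of any orthonormal basis $\widetilde{\Phi}$ of $\mathrm{range}(\tG)$. This is the CS-decomposition viewpoint of \cite{ps81}.

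Next I would bound the GSV differences by the distance between these bases. Because singular values are invariant under right multiplication by a unitary, the Hoffman--Wielandt inequality for singular values gives, for every unitary $W$,
\begin{equation*}
\sum_i (\varphi_i - \tvarphi_i)^2 \le \fnorm{\Phi_1 - \widetilde{\Phi}_1 W}^2 \quad \text{and} \quad \sum_i (\chi_i - \tchi_i)^2 \le \fnorm{\Phi_2 - \widetilde{\Phi}_2 W}^2.
\end{equation*}
Crucially the same $W$ serves both blocks, so adding the two estimates yields $\sum_i[(\varphi_i - \tvarphi_i)^2 + (\chi_i - \tchi_i)^2] \le \fnorm{\Phi - \widetilde{\Phi} W}^2$. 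Minimizing over unitary $W$ (the minimizer is the unitary polar factor of $\widetilde{\Phi}^\H \Phi$) gives $2\sum_i (1 - \cos\theta_i)$, where $\cos\theta_i$ are the singular values of $\widetilde{\Phi}^\H \Phi$, i.e. the cosines of the principal angles $\theta_i \in [0, \pi/2]$ between $\mathrm{range}(G)$ and $\mathrm{range}(\tG)$. Since $1 - \cos\theta_i \le \sin^2\theta_i$ on $[0,\pi/2]$,
\begin{equation*}
\sum_i\big[(\varphi_i - \tvarphi_i)^2 + (\chi_i - \tchi_i)^2\big] \le 2\sum_i \sin^2\theta_i = \fnorm{P_G - P_{\tG}}^2.
\end{equation*}

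It then remains to show $\fnorm{P_G - P_{\tG}} \le \calE$. Using $\rank{G} = \rank{\tG} = n$, both ranges are $n$-dimensional, so $\fnorm{P_G - P_{\tG}}^2 = 2\sum_i \sin^2\theta_i$ and the two off-diagonal CS blocks have equal singular values, i.e. $\fnorm{(I - P_{\tG}) P_G}^2 = \fnorm{(I - P_G) P_{\tG}}^2 = \sum_i \sin^2\theta_i$. Writing $P_G = G G^{\dagger}$ and using $(I - P_{\tG})\tG = 0$ together with $G = \tG - \Delta G$ gives $(I - P_{\tG}) P_G = -(I - P_{\tG})\,\Delta G\, G^{\dagger}$, whence $\big(\sum_i \sin^2\theta_i\big)^{1/2} \le \fnorm{\Delta G}\,\norm{G^{\dagger}}$; interchanging the roles of $G$ and $\tG$ gives the same bound with $\norm{\tG^{\dagger}}$. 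Combining, $\sum_i \sin^2\theta_i \le \fnorm{\Delta G}^2 \min\{\norm{G^{\dagger}}, \norm{\tG^{\dagger}}\}^2$, so that $\fnorm{P_G - P_{\tG}}^2 \le 2\fnorm{\Delta G}^2 \min\{\norm{G^{\dagger}}, \norm{\tG^{\dagger}}\}^2 = \calE^2$, which chains with the previous display to prove the lemma.

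The step I expect to be most delicate is the middle one: one must use a single common unitary $W$ across both the top and bottom blocks so that the two blockwise Hoffman--Wielandt estimates add up to a bound on $\fnorm{\Phi - \widetilde{\Phi}W}$, and then relate $\min_W \fnorm{\Phi - \widetilde{\Phi}W}$ to the projector distance. Pinning down the constant $\sqrt{2}$ --- which enters through $\fnorm{P_G - P_{\tG}} = \sqrt{2}\,\big(\sum_i \sin^2\theta_i\big)^{1/2}$ for equidimensional subspaces --- and justifying the equality of the two sine blocks (which genuinely uses $\rank{G} = \rank{\tG} = n$) are the points that require care; the remaining manipulations are routine.
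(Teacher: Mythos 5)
You should know at the outset that the paper contains no proof of \cref{lem:per}: the lemma is imported verbatim from Paige \cite{p84}, so the only ``paper route'' here is a citation, not an argument. Your proof is a correct, self-contained reconstruction of the classical CS-decomposition proof underlying that reference, and every step checks out. Since $\Sigma_{G_1}^2+\Sigma_{G_2}^2=I$, the stacked matrix $\begin{pmatrix}U\Sigma_{G_1}\\ V\Sigma_{G_2}\end{pmatrix}$ is an orthonormal basis of $\mathrm{range}(G)$, and since any two orthonormal bases of the same subspace differ by a right unitary factor, the two blocks of \emph{every} orthonormal basis of $\mathrm{range}(G)$ have the GSVs as their singular values --- the CS viewpoint of \cite{ps81}. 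Your common-unitary Mirsky (Hoffman--Wielandt) step is sound for exactly the reason you flag: the ordering \cref{ab} makes the $\varphi_i$ decreasing and the $\chi_i$ increasing simultaneously for both the exact and the perturbed pair, so the index-wise pairing appearing in the lemma coincides with the sorted pairing Mirsky requires in each block, with one and the same $W$. The remaining identities --- $\min_W\fnorm{\Phi-\widetilde{\Phi}W}^2=2\sum_i(1-\cos\theta_i)$, the bound $1-\cos\theta_i\le\sin^2\theta_i$ on $[0,\pi/2]$, the equidimensional facts $\fnorm{P_G-P_{\tG}}^2=2\sum_i\sin^2\theta_i$ and $\fnorm{(I-P_{\tG})P_G}^2=\sum_i\sin^2\theta_i$, and the algebraic step $(I-P_{\tG})P_G=-(I-P_{\tG})\Delta G\,G^{\dagger}$ giving $\fnorm{(I-P_{\tG})P_G}\le\fnorm{\Delta G}\norm{G^{\dagger}}$ --- are all standard and correctly applied; swapping the roles of $G$ and $\tG$ produces the $\min$ in $\calE$, and the $\sqrt{2}$ emerges from the projector identity exactly as you say. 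Compared with the paper, your route buys self-containedness and makes explicit where the full-rank hypothesis and the constant $\sqrt{2}$ enter; the cost is length, plus the (harmless, implicit) reliance on the reduced-GSVD setting of \cref{eq:gsvd1}, where $U$ and $V$ being column orthogonal forces $m,p\ge n$ so that each block carries a full set of $n$ singular values to pair.
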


Through a direct calculation, we could have the following error bounds
based on \cref{lem:per}.

\begin{corollary} \label{cor:error-bounds}
    Assume $\rank{G} = \rank{\tG} = n$. Then
    \begin{equation*}
        |\Delta \varphi| \leq \calE \quad \text{and} \quad
        |\Delta \chi| \leq \calE,
    \end{equation*}
    where $|\Delta \varphi| = \max_{1 \leq \nu \leq n}\{|\Delta
    \varphi_\nu|\}$ and $|\Delta \chi| = \max_{1 \leq \nu \leq n}\{|\Delta
    \chi_\nu|\}$.
\end{corollary}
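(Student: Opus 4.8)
The plan is to derive this as an immediate consequence of \cref{lem:per}, extracting a per-index bound from the aggregate bound. The left-hand side of the inequality in \cref{lem:per} is the square root of a sum of $2n$ nonnegative terms, so I would first observe that any single term is controlled by the entire sum. This reduces the corollary to a one-line coordinate projection of a Euclidean-norm estimate.

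Concretely, I would fix an index $\nu$ with $1 \leq \nu \leq n$. Since every summand $(\varphi_i - \tvarphi_i)^2$ and $(\chi_i - \tchi_i)^2$ is nonnegative, a single summand is bounded by the full sum, so that
\begin{equation*}
    (\varphi_\nu - \tvarphi_\nu)^2
    \leq \sum_{i=1}^n \left[(\varphi_i - \tvarphi_i)^2 + (\chi_i - \tchi_i)^2\right]
    \leq \calE^2,
\end{equation*}
where the last inequality is \cref{lem:per} after squaring. Taking square roots gives $|\Delta \varphi_\nu| = |\varphi_\nu - \tvarphi_\nu| \leq \calE$, and since this bound is uniform in $\nu$, maximizing over $\nu$ yields $|\Delta \varphi| = \max_{1 \leq \nu \leq n}\{|\Delta \varphi_\nu|\} \leq \calE$. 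The companion bound $|\Delta \chi| \leq \calE$ follows by the identical argument applied instead to the summands $(\chi_i - \tchi_i)^2$, which appear on the very same left-hand side of \cref{lem:per}.

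There is essentially no obstacle here: the only point requiring care is that the perturbed and unperturbed generalized singular value pairs be indexed consistently, so that $\Delta\varphi_\nu$ pairs $\varphi_\nu$ with $\tvarphi_\nu$ under the same $\nu$. This is already guaranteed by the convention that both $(\varphi_i, \chi_i)$ and $(\tvarphi_i, \tchi_i)$ are ordered as in \cref{ab}, which is exactly the ordering under which \cref{lem:per} is stated. Hence the corollary amounts to reading off a single coordinate of the aggregate $\ell^2$-bound, and I would expect the proof to occupy only a few lines.
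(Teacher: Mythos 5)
Your proof is correct and matches the paper's intent exactly: the paper states \cref{cor:error-bounds} as following from \cref{lem:per} "through a direct calculation," which is precisely your observation that each nonnegative summand is dominated by the full sum, so $|\Delta\varphi_\nu| \leq \calE$ and $|\Delta\chi_\nu| \leq \calE$ for every $\nu$. Nothing further is needed.
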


\begin{theorem} \label{thm:exp}
    Let $\vartheta_\nu$, $P_{1,\nu}$, $P_{2,\nu}$, $D_1$ and $D_2$
    represent the exact values, and $\tvartheta_\nu$, $\tP_{1,\nu}$,
    $\tP_{2,\nu}$, $\tD_1$ and $\tD_2$ represent the values calculated by
    \cref{alg:rand-gsv}, \cref{eq:antisymmetric-angular-distance},
    \cref{eq:generalized-fraction} and
    \cref{eq:generalized-shannon-entropy}. Then for $\nu = 1, 2, \ldots, n$,
    \begin{flalign*}
        (i)&\;
        \left|\vartheta_\nu - \tvartheta_\nu \right|
        \leq \arcsin(2 \calE), \\
        (ii)&\;
        \left|P_{1,\nu} - \tP_{1,\nu}\right| \leq
        \frac{2 \varphi_\nu \calE} {\sum_{k=1}^n \varphi_k^2}
        + o\left( \calE\right) \quad \text{and} \quad
        \left|P_{2,\nu} - \tP_{2,\nu}\right| \leq
        \frac{2\chi_\nu \calE}{\sum_{k=1}^n \chi_{k}^2}
        + o\left( \calE\right), \\
        (iii)&\;
        \left|D_1 - \tD_1\right| \leq 2\calE \sum_{i=1}^n
        \left|\frac{\varphi_i}{\sum_{k=1}^n \varphi_k^2}
        \left(\log\frac{\varphi_i^2}{\sum_{k=1}^n
        \varphi_k^2}\frac{1}{\log{n}}+D_1\right) \right| + o(\calE) \quad
        \text{and}\\
        &\;
        \left|D_2 - \tD_2\right| \leq 2\calE \sum_{i=1}^n
        \left|\frac{\chi_i}{\sum_{k=1}^n \chi_k^2}
        \left(\log\frac{\chi_i^2}{\sum_{k=1}^n\chi_k^2}\frac{1}{\log{n}} +
        D_2\right) \right| + o(\calE).
    \end{flalign*}
\end{theorem}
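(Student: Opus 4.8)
The plan is to read all three estimates as first-order perturbation bounds driven by \cref{cor:error-bounds}, which supplies the pointwise control $|\varphi_\nu - \tvarphi_\nu| \le \calE$ and $|\chi_\nu - \tchi_\nu| \le \calE$ for every $\nu$ (itself a consequence of \cref{lem:per}). The key structural fact I would exploit throughout is the normalization: both the exact and the computed pairs satisfy $\varphi_\nu^2 + \chi_\nu^2 = 1$ and $\tvarphi_\nu^2 + \tchi_\nu^2 = 1$, so each pair lies on the unit circle. The $G_2$ statements follow verbatim from the $G_1$ ones with $\chi$ in place of $\varphi$, so I would only treat $G_1$ in detail.

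For (i) I would avoid any expansion and instead compute the sine of the difference exactly. Since $\arctan(\varphi_\nu/\chi_\nu)$ and $\arctan(\tvarphi_\nu/\tchi_\nu)$ are angles in $[0,\pi/2]$ whose cosines and sines are $(\chi_\nu,\varphi_\nu)$ and $(\tchi_\nu,\tvarphi_\nu)$ respectively, the angle-subtraction formula gives
\[
\sin(\vartheta_\nu - \tvartheta_\nu) = \varphi_\nu \tchi_\nu - \chi_\nu \tvarphi_\nu = \varphi_\nu(\tchi_\nu - \chi_\nu) - \chi_\nu(\tvarphi_\nu - \varphi_\nu).
\]
Using $\varphi_\nu,\chi_\nu \le 1$ and the corollary yields $|\sin(\vartheta_\nu - \tvartheta_\nu)| \le 2\calE$; because the angular difference lies in $[-\pi/2,\pi/2]$, applying $\arcsin$ (valid once $2\calE \le 1$) gives the claimed bound with no remainder, matching the absence of an $o(\calE)$ term in (i).

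For (ii) and (iii) the approach is a first-order Taylor expansion in $\Delta\varphi_k = \tvarphi_k - \varphi_k$, writing $S = \sum_k \varphi_k^2$. Expanding the ratio gives $\Delta P_{1,\nu} = \tfrac{2\varphi_\nu \Delta\varphi_\nu}{S} - P_{1,\nu}\tfrac{2\sum_k \varphi_k \Delta\varphi_k}{S} + o(\calE)$; the first term has modulus at most $2\varphi_\nu\calE/S$, which is the leading contribution recorded in (ii). For (iii) I would differentiate $\tD_1 = -\tfrac{1}{\log n}\sum_k \tP_{1,k}\log \tP_{1,k}$ through the chain rule and use two identities: first $\sum_k \Delta P_{1,k} = 0$ (since $\sum_k P_{1,k}\equiv 1$), which kills the additive constant from $\tfrac{d}{dx}(x\log x)=\log x + 1$ and leaves $\Delta D_1 = -\tfrac{1}{\log n}\sum_k \log P_{1,k}\,\Delta P_{1,k}$; and second $\sum_k P_{1,k}\log P_{1,k} = -D_1\log n$, which, after substituting the first-order $\Delta P_{1,k}$ above, collapses the denominator-sensitivity piece into the clean additive $+D_1$. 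Bounding $|\Delta\varphi_k|\le\calE$ termwise and pulling the absolute value inside (using $\varphi_k \ge 0$) then reproduces the stated sum.

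The main obstacle is (iii): one must carry the chain rule carefully enough that the two identities combine the numerator- and denominator-sensitivity terms into the single factor $\tfrac{1}{\log n}\log(\varphi_k^2/S) + D_1$, and one must justify that the second-order remainder is truly $o(\calE)$. This is delicate near vanishing generalized singular values, where $\log P_{1,k}\to-\infty$ makes $x\log x$ non-smooth; the redeeming point is that every offending logarithm is multiplied by $\varphi_k$ and $\varphi_k\log\varphi_k\to 0$, so the final bound remains finite, but the remainder analysis should be argued with this degeneracy explicitly in mind. A lesser subtlety is that the normalizing denominator in (ii) also contributes at first order, so the displayed bound there should be understood as the dominant sensitivity term.
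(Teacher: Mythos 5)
Your proposal follows essentially the same route as the paper's proof: part (i) is the identical angle-subtraction argument giving $|\sin(\vartheta_\nu-\tvartheta_\nu)|\le|\Delta\varphi_\nu|+|\Delta\chi_\nu|\le 2\calE$, and parts (ii)--(iii) are the same first-order Taylor/perturbation expansions driven by \cref{cor:error-bounds}, with your chain-rule bookkeeping through $\Delta P_{1,k}$ together with the identities $\sum_k \Delta P_{1,k}=0$ and $\sum_k P_{1,k}\log P_{1,k}=-D_1\log n$ reproducing exactly the paper's partial derivatives $\partial D_1/\partial \varphi_i$. The two caveats you flag---the first-order denominator contribution in (ii) and the delicate $o(\calE)$ remainder near vanishing generalized singular values---are equally present (and left unaddressed) in the paper's own proof, so they do not distinguish your argument from it.
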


\begin{proof}
Let $\varphi_\nu = \sin(\gamma_\nu)$, $\chi_\nu = \cos(\gamma_\nu)$,
$\tvarphi_\nu = \varphi_\nu + \Delta \varphi_\nu = \sin(\gamma_\nu +
\Delta\gamma_\nu)$ and $\tchi_\nu = \chi_\nu + \Delta\chi_\nu =
\cos(\gamma_\nu + \Delta\gamma_\nu)$, where $|\Delta\varphi_\nu| \leq 1$,
$|\Delta\chi_\nu| \leq 1$, $\Delta\gamma_\nu$ is a perturbation. Without
loss of generality, we denote $\Theta=\diag{\theta_1, \ldots, \theta_n}$
with $\theta_\nu = \varphi_\nu$ as the case for $l_1 \leq l_2$ in
\cref{alg:rand-gsv}. Recall the expressions of $\vartheta_\nu$,
$P_{i,\nu}$ and $D_i$ in terms of $\theta_\nu$,
\begin{eqnarray*}
    \vartheta_\nu & = & \arctan \left(\frac{\theta_\nu}
    {\sqrt{1 - \theta_\nu^2}}\right) - \frac{\pi}{4}, \quad
    P_{1,\nu} = \frac{\theta_\nu^2}{\tr{\Theta^2}}, \quad
    P_{2,\nu} = \frac{1 - \theta_\nu^2}{n - \tr{\Theta^2}}, \\
    && D_1 = -\frac{1}{\log{n}} \left[ \sum_{i=1}^n
    \frac{\theta_i^2}{\tr{\Theta^2}}
    \left[2 \log{\theta_i} - \log\left(\tr{\Theta^2}\right) \right]\right],\\
    && D_2 = -\frac{1}{\log{n}} \left[ \sum_{i=1}^n
    \frac{1 - \theta_i^2}{n - \tr{\Theta^2}}
    \left[ \log\left(1-\theta_i^2\right) -
    \log\left(n-\tr{\Theta^2}\right) \right] \right],
\end{eqnarray*}
for $\nu = 1, \ldots, n$.

(i) By definitions of $\varphi_\nu$, $\chi_{\nu}$ and $\gamma_\nu$, we
obtain,
\begin{eqnarray*}
    \gamma_\nu &=& \arcsin(\varphi_\nu), \quad \gamma_\nu +
    \Delta\gamma_\nu = \arcsin(\varphi_\nu + \Delta\varphi_\nu) \quad and
    \quad\\
    \gamma_\nu &=& \arccos(\chi_\nu), \quad \gamma_\nu + \Delta\gamma_\nu
    = \arccos(\chi_\nu + \Delta\chi_\nu).
\end{eqnarray*}
By trigonometric identities, the difference between the above equations
admit,
\begin{eqnarray*}
    \sin(\Delta\gamma_\nu) &=& \sin(\arcsin(\varphi_\nu +
    \Delta\varphi_\nu) - \arcsin(\varphi_\nu)) \\
    &=& \sin\arcsin(\varphi_\nu + \Delta\varphi_\nu)
    \cos\arcsin(\varphi_\nu) - \cos\arcsin(\varphi_\nu
    + \Delta\varphi_\nu) \sin\arcsin(\varphi_\nu) \\
    &=& (\varphi_\nu + \Delta\varphi_\nu) \chi_\nu - (\chi_\nu +
    \Delta\chi_\nu) \varphi_\nu \\
    &=& \Delta\varphi_\nu \chi_\nu - \Delta\chi_\nu \varphi_\nu.
\end{eqnarray*}

Adopting the inequalities in \cref{cor:error-bounds} and the equality
recursively, we obtain
\begin{eqnarray*}
    \sin\left|\vartheta_\nu - \tvartheta_\nu\right| =
    \left|\sin(\Delta\gamma_\nu)\right| \leq |\Delta\varphi_\nu| +
    |\Delta\chi_\nu| \leq  2\calE,
\end{eqnarray*}
and, hence,
\begin{eqnarray*}
    \left| \vartheta_\nu - \tvartheta_\nu \right| \leq  \arcsin(2\calE).
\end{eqnarray*}

(ii) By the Taylor expansion of $P_{1,\nu}$ at $(\varphi_1, \varphi_2,
\ldots, \varphi_n)$, we obtain
\begin{eqnarray*}
    \left|\tP_{1,\nu} - P_{1,\nu}\right| & = &
    \left|\sum_{\nu=1}^n \Delta \varphi_\nu\frac{\partial
    P_{1,\nu}}{\partial\varphi_\nu} + o(\Delta\varphi_\nu)\right|
    \leq
    |\Delta\varphi|\frac{2\varphi_\nu\sum_{k \neq \nu} \varphi_k^2}
    {\left(\sum_{k=1}^n \varphi_k^2\right)^2} + o(\Delta\varphi)
    \leq \frac{2\calE\varphi_\nu}
    {\sum_{k=1}^n \varphi_k^2} + o(\calE),
\end{eqnarray*}
where the second inequality adopts \cref{cor:error-bounds}. The bound for
$\left|\tP_{2,\nu} - P_{2,\nu}\right|$ could be derived similarly.

(iii) By the Taylor expansion of $D_1$ at $(\varphi_1, \varphi_2, \ldots,
\varphi_n)$,
\begin{eqnarray*}
    \tilde{D}_1  = D_1 +  \sum_{i=1}^n \Delta \varphi_i \frac{\partial
    D_1}{\partial \varphi_i} + o(\Delta \varphi_i),
\end{eqnarray*}
where
\begin{eqnarray*}
    \frac{\partial D_1}{\partial \varphi_{i}} &=& -\frac{1}{\log{n}}
    \left[\frac{2\varphi_i}{\sum_{k=1}^n\varphi_k^2}
    \left(\log\frac{\varphi_i^2}{\sum_{k=1}^n \varphi_k^2} + 1\right)
    -\sum_{j=1}^n \frac{2\varphi_i \varphi_j^2} {(\sum_{k=1}^n
    \varphi_k^2)^2}
    \left(\log\frac{\varphi_j^2}{\sum_{k=1}^n \varphi_k^2} + 1\right)
    \right]\\
    &=& -\frac{1}{\log{n}} \left[\frac{2\varphi_i}{\sum_{k=1}^n
    \varphi_k^2} \left(\log\frac{\varphi_i^2}{\sum_{k=1}^n \varphi_k^2} +
    1\right) -\frac{2\varphi_i}{\sum_{k=1}^n \varphi_k^2}\left(D_1 \log{n}
    + 1\right) \right]\\
    &=& -\frac{1}{\log{n}} \left[\frac{2\varphi_i}{\sum_{k=1}^n
    \varphi_k^2} \left(\log\frac{\varphi_i^2}{\sum_{k=1}^n \varphi_k^2} +
    D_1 \log{n}\right) \right].
\end{eqnarray*}
We obtain
\begin{eqnarray*}
    \left|D_1 - \tD_1\right| & = & \left| \sum_{i=1}^n \Delta \varphi_i
    \frac{\partial D_1}{\partial \varphi_i} + o(\Delta \varphi_i)\right|
    \leq \left|\Delta \varphi\right| \left| \sum_{i=1}^n \frac{\partial
    D_1}{\partial \varphi_i}\right| + o(\Delta \varphi)\\
    & \leq & 2\calE \sum_{i=1}^n \left|\frac{\varphi_i}{\sum_{k=1}^n
    \varphi_k^2} \left(\log\frac{\varphi_i^2}{\sum_{k=1}^n
    \varphi_k^2}\frac{1}{\log{n}} + D_1\right) \right| + o(\calE),\\
\end{eqnarray*}
where second inequality adopts \cref{cor:error-bounds}.

The bound for $\left|\tD_2 - D_2\right|$ could be derived similarly.
\end{proof}

\section{Numerical experiments}
\label{sec:numerical-results}

We apply \cref{alg:rand-gsv} in comparative analysis of both synthetic
data sets and genome-scale expression data sets from practice. Comparative
analysis quantities, $\vartheta_\nu, P_{i,\nu}, D_i$ for $i = 1, 2$ and $1
\leq \nu \leq n$, are evaluated following a GSV calculation. All numerical
experiments are carried out on MATLAB R2021b with machine epsilon being
around $2.2204 \times 10^{-16}$. By default, we adopt MATLAB {\tt gsvd}
results as referneces.
The source code of our method is released at \url{https://github.com/shenwj87/RGSVsA.git}.

\subsection{Synthetic data sets}
\label{subsec4.1}

Synthetic data sets are adopted to demonstrate the efficiency of
\cref{alg:rand-gsv}. We compare \cref{alg:rand-gsv} with the algorithm in
\cite{f05}, Riemann Newton (RN) method~\cite{xnb20}, the MATLAB built-in
functions {\tt gsvd} and {\tt economy-sized gsvd}.

The synthetic data sets are generated as follows. Here we give the rank of
$G_{1}$ and the rank of $G_{2}$ both being 60\% of $\min\{m,p,n\}$. The
generalized singular values that are neither one nor zero among
$\alpha_1^\star, \ldots, \alpha_n^\star$ are sampled from a random uniform
distribution after sorting. Then $\beta_i^\star$ is calculated such that
$(\alpha_i^\star)^2 + (\beta_i^\star)^2 = 1$ for $1 \le i \le n$. The
nonsingular matrix $R_\star \in \bbC^{n \times n}$ is an $n$-by-$n$ matrix
of normally distributed random complex numbers. The unitary matrices
$U_\star \in \bbC^{m \times n}$, $V_\star \in \bbC^{p\times n}$ are
orthonormalized Gaussian random complex matrices. The data set $\{G_1,
G_2\}$ is then $G_1 = U_\star \Sigma_1^\star R_\star$ and $G_2 = V_\star
\Sigma_2^\star R_\star$, where $\Sigma_1^\star = \diag{\alpha_1^\star,
\ldots, \alpha_n^\star}$ and $\Sigma_2^\star = \diag{\beta_1^\star,
\ldots, \beta_n^\star}$. Various $(m, p, n)$ choices are explored.
Absolute errors of GSVs are used to compare the accuracy of GSV
algorithms. Numerical results are reported in \cref{fig:syntheticresult},
\cref{tab:syntheticresult}, and \cref{fig:normfig}.

\begin{table}[htp]
    \centering
    \caption{Runtime (second) and absolute errors for synthetic data
    sets. The shortest runtimes are bolded.} \label{tab:syntheticresult}
    \begin{tabular}{ccrcc}
    \toprule
    $(m,p,n)$ & Algorithm & Runtime & $\fnorm{\Sigma_1^\star - \Sigma_1}$
    &$\fnorm{\Sigma_2^\star - \Sigma_2}$ \\
    \toprule
    \multirow{5}{*}{(10000,10000,10000)}
    & \cref{alg:rand-gsv}      & \textbf{105.48} & 4.45E$-$12 & 3.10E$-$11 \\
    & {\tt economy-sized gsvd} &         377.82  & 3.88E$-$12 & 3.13E$-$12 \\
    & {\tt gsvd}               &         365.21  & 3.88E$-$12 & 3.13E$-$13 \\
    & Algorithm in \cite{f05}  &        1220.28  & 1.36E$-$09 & 4.12E$-$09 \\
    & RN method~\cite{xnb20}   &         375.81  & 1.02E$-$12 & 2.35E$-$12 \\
    \midrule
    \multirow{5}{*}{(8010,4005,4000)}
    & \cref{alg:rand-gsv}      & \textbf{16.06} & 4.45E$-$14 & 1.27E$-$12 \\
    & {\tt economy-sized gsvd} &         22.83  & 9.19E$-$12 & 7.82E$-$12 \\
    & {\tt gsvd}               &         49.50  & 9.51E$-$12 & 6.44E$-$12 \\
    & Algorithm in \cite{f05}  &         63.82  & 6.84E$-$09 & 5.66E$-$09 \\
    & RN method~\cite{xnb20}   &         36.27  & 1.59E$-$12 & 1.00E$-$12 \\
    \midrule
    \multirow{5}{*}{(9010,9005,5000)}
    & \cref{alg:rand-gsv}      & \textbf{36.90} & 6.82E$-$14 & 7.20E$-$14 \\
    & {\tt economy-sized gsvd} &         44.97  & 8.22E$-$12 & 7.93E$-$12 \\
    & {\tt gsvd}               &         61.42  & 8.50E$-$12 & 7.25E$-$12 \\
    & Algorithm in \cite{f05}  &        158.44  & 7.19E$-$09 & 3.62E$-$09 \\
    & RN method~\cite{xnb20}   &         57.71  & 1.60E$-$12 & 1.36E$-$12 \\
    \midrule
    \multirow{5}{*}{(8000,8010,8005)}
    & \cref{alg:rand-gsv}      & \textbf{74.29} & 3.65E$-$10 & 6.36E$-$10 \\
    & {\tt economy-sized gsvd} &        150.78  & 5.06E$-$12 & 4.65E$-$12 \\
    & {\tt gsvd}               &        189.10  & 4.79E$-$12 & 3.92E$-$12 \\
    & Algorithm in \cite{f05}  &        647.37  & 1.08E$-$05 & 1.04E$-$05 \\
    & RN method~\cite{xnb20}   &        165.87  & 1.46E$-$12 & 1.72E$-$12 \\
    \midrule
    \multirow{5}{*}{(10000,5010,5000)}
    & \cref{alg:rand-gsv}      & \textbf{6.45} & 1.78E$-$15 & 1.65E$-$15 \\
    & {\tt economy-sized gsvd} &        41.21  & 2.69E$-$12 & 2.21E$-$12 \\
    & {\tt gsvd}               &        48.93  & 2.84E$-$12 & 1.78E$-$12 \\
    & Algorithm in \cite{f05}  &       175.65  & 8.76E$-$06 & 1.82E$-$05 \\
    & RN method~\cite{xnb20}   &        44.79  & 2.72E$-$12 & 2.46E$-$12 \\
    \midrule
    \multirow{5}{*}{(10010,5000,10000)}
    & \cref{alg:rand-gsv}      & \textbf{8.87} & 2.20E$-$15 & 2.04E$-$15 \\
    & {\tt economy-sized gsvd} &       197.49  & 2.36E$-$12 & 2.90E$-$12 \\
    & {\tt gsvd}               &       175.78  & 2.89E$-$12 & 2.44E$-$12 \\
    & Algorithm in \cite{f05}  &       298.85  & 5.20E$-$08 & 4.28E$-$08 \\
    & RN method~\cite{xnb20}   &       182.62  & 2.67E$-$12 & 2.85E$-$12 \\
    \midrule
    \multirow{5}{*}{(7000,7010,10000)}
    & \cref{alg:rand-gsv}      & \textbf{56.21} & 3.39E$-$14 & 3.77E$-$14 \\
    & {\tt economy-sized gsvd} &        117.85  & 2.78E$-$12 & 1.51E$-$12 \\
    & {\tt gsvd}               &        118.75  & 2.80E$-$12 & 2.57E$-$12 \\
    & Algorithm in \cite{f05}  &        429.30  & 3.76E$-$07 & 3.71E$-$07 \\
    & RN method~\cite{xnb20}   &        118.06  & 2.93E$-$12 & 2.86E$-$12 \\
    \midrule
    \multirow{5}{*}{(5000,10000,11000)}
    & \cref{alg:rand-gsv}      & \textbf{133.75} & 1.39E$-$14 & 1.46E$-$15 \\
    & {\tt economy-sized gsvd} &         195.23  & 1.17E$-$12 & 2.40E$-$12 \\
    & {\tt gsvd}               &         192.18  & 1.09E$-$12 & 3.72E$-$12 \\
    & Algorithm in \cite{f05}  &        1110.65  & 4.40E$-$07 & 4.31E$-$07 \\
    & RN method~\cite{xnb20}   &         194.31  & 6.11E$-$12 & 8.14E$-$12 \\
    \bottomrule
    \end{tabular}
\end{table}

\begin{figure}[htp]
    \centering
    \includegraphics[width=2.9in]{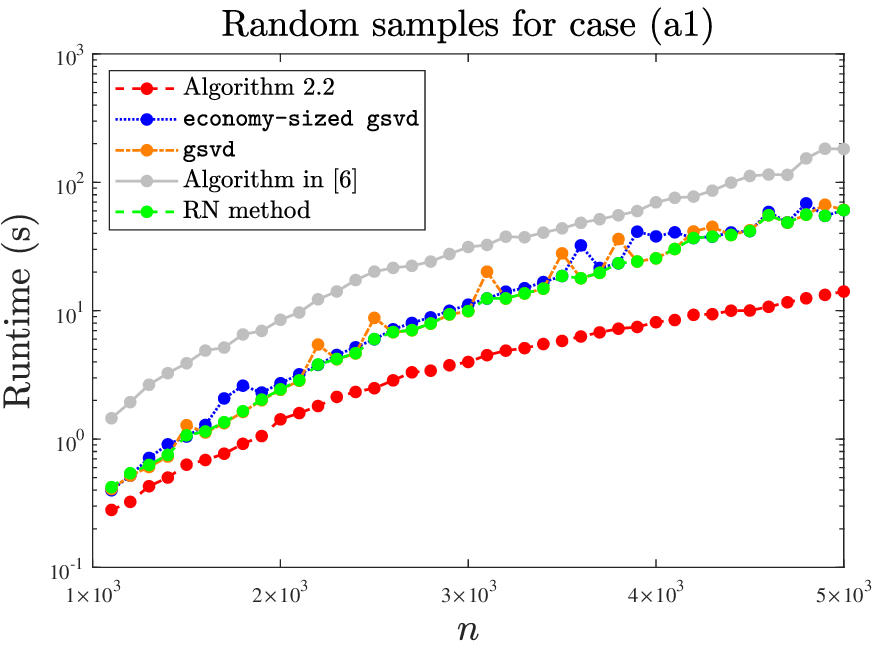}\;\;\;\;
    \includegraphics[width=2.9in]{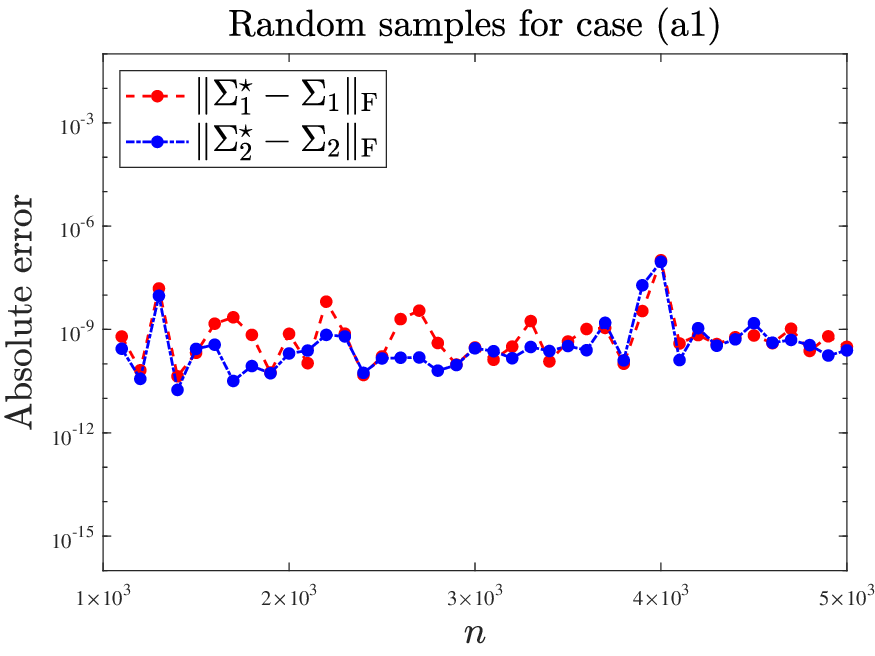}\\
    \includegraphics[width=2.9in]{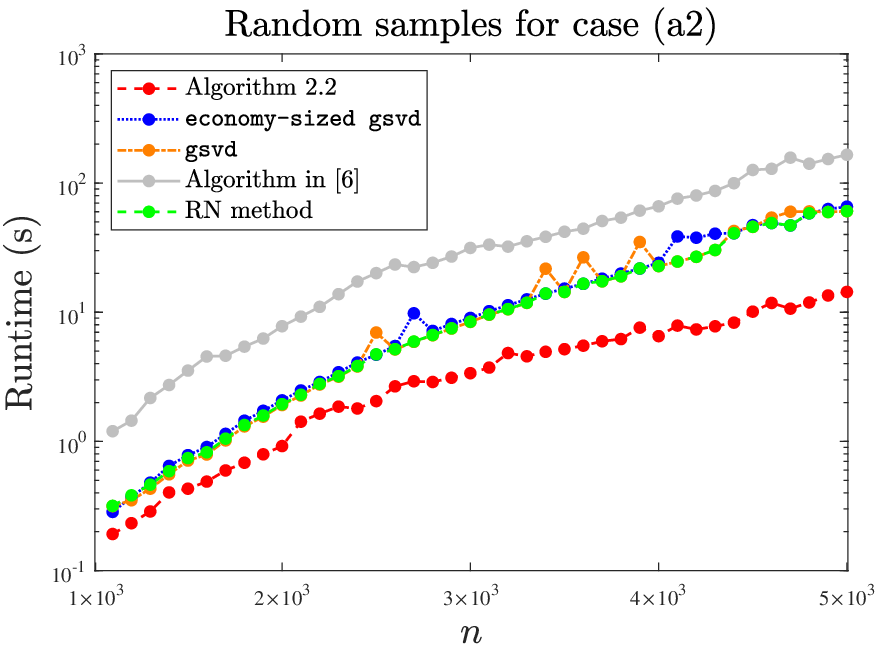}\;\;\;\;
    \includegraphics[width=2.9in]{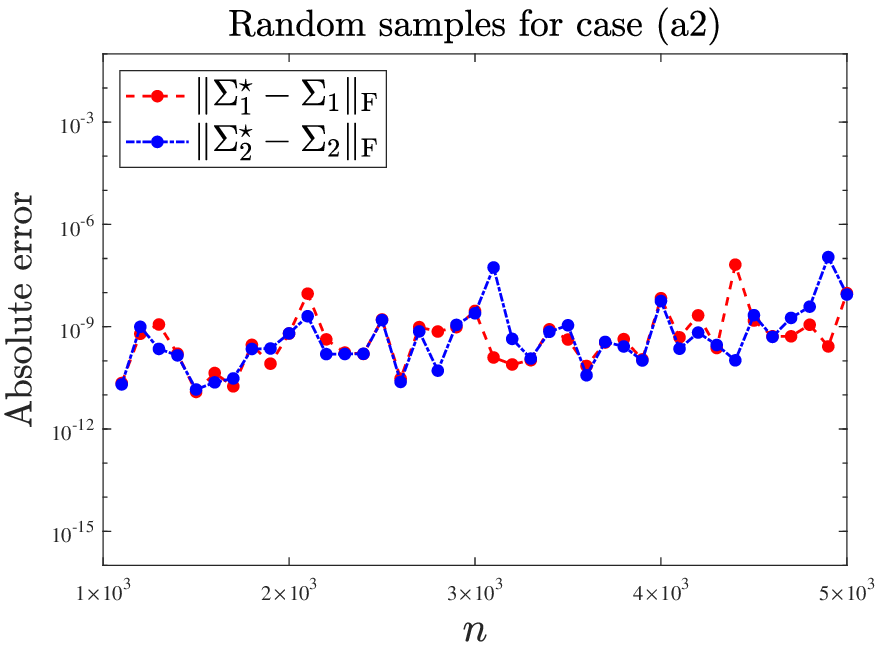}\\
    \includegraphics[width=2.9in]{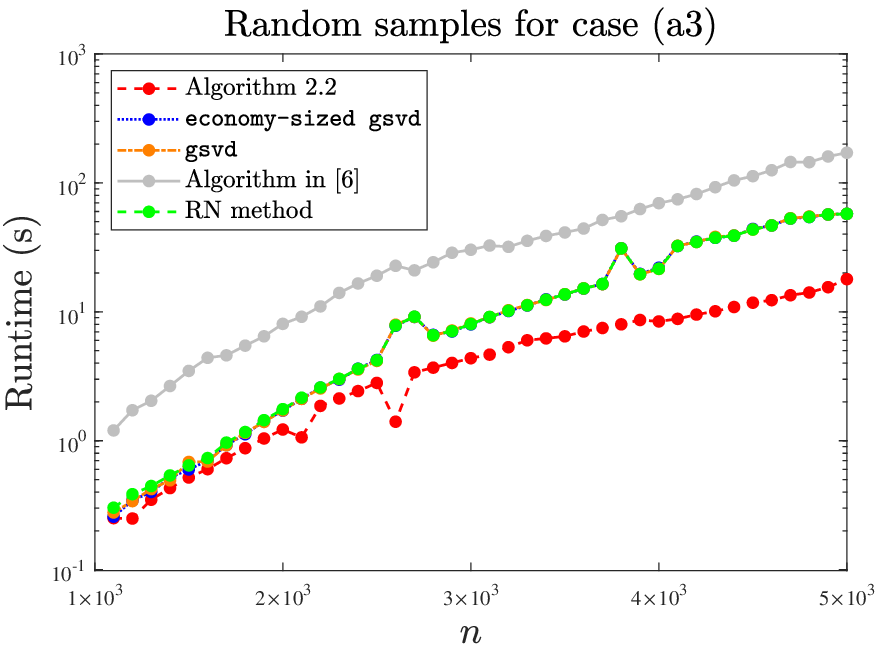}\;\;\;\;
    \includegraphics[width=2.9in]{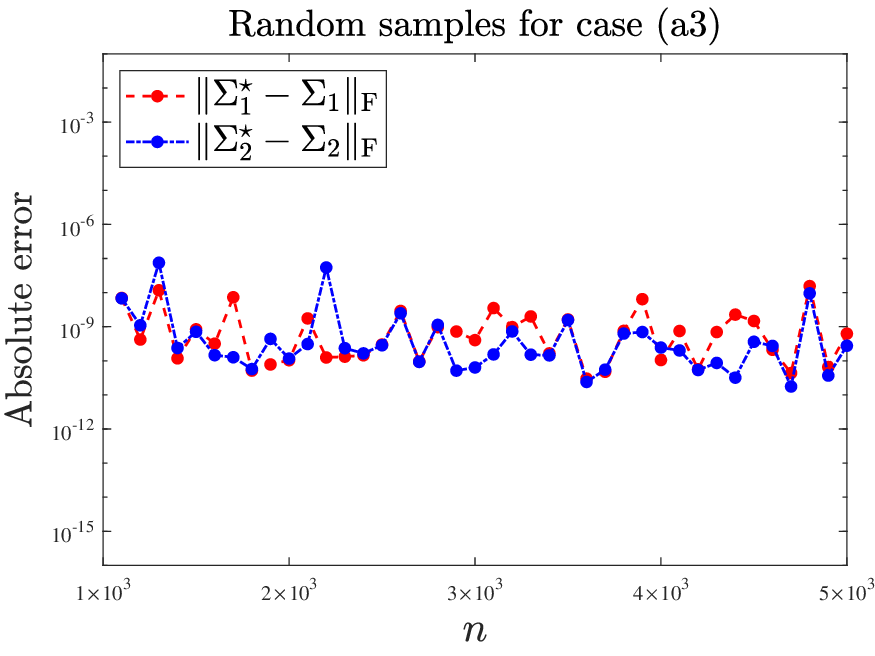}\\
    \caption{Runtime (second) and absolute errors for cases of $(m, p,
    n)$. (a1) with $m = n + 100$ and $p = n + 5$; (a2) with $m = n + 100$
    and $p = n - 5$; (a3) with $m = n - 100$ and $p = n - 5$.}
    \label{fig:syntheticresult}
\end{figure}

\begin{figure}[htb]
    \centering
    \includegraphics[width=3in]{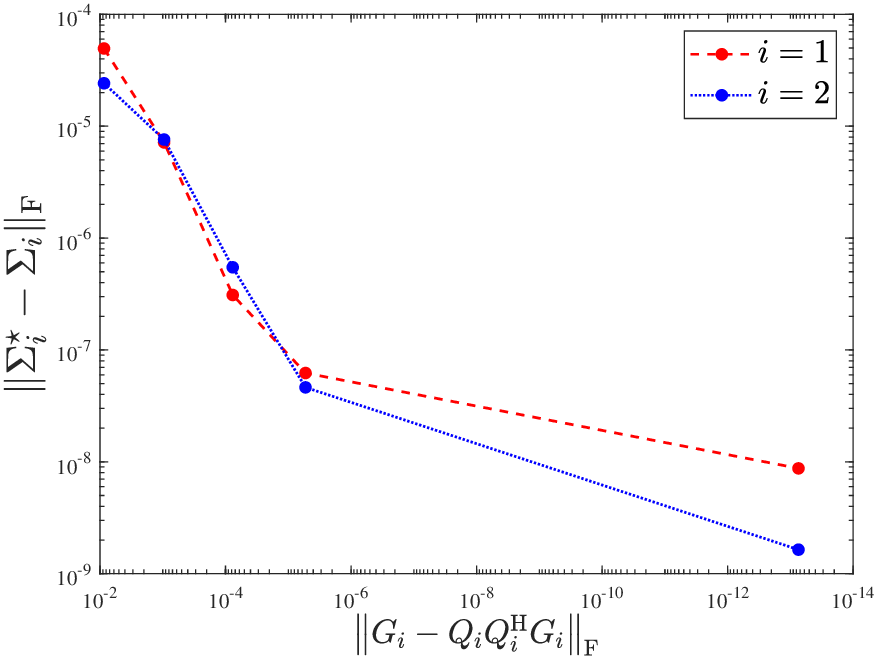}
    \caption{Relation between $\fnorm{G_i - Q_i Q_i^{\H} G_i}$ and
    $\fnorm{\Sigma_i^\star - \Sigma_i}$, for $i = 1, 2$.}
    \label{fig:normfig}
\end{figure}

In \cref{tab:syntheticresult}, we could observe the advantage of
\cref{alg:rand-gsv} both in runtime and accuracy. The runtimes of
\cref{alg:rand-gsv} are the shortest in all cases we have tested. In some
cases, it is 10x to 20x faster than the second-fastest algorithm. In the
least case, \cref{alg:rand-gsv} saves about 20\% runtime. Regarding
accuracy, \cref{alg:rand-gsv} achieves the best accuracy in most of the
cases. In the worst case, \cref{alg:rand-gsv} achieves $10^{-10}$ absolute
accuracy comparing to $10^{-12}$ of the best. Such an accuracy is
sufficient in almost all applications.

In \cref{fig:syntheticresult}, we explore the performance of various GSV
algorithms on matrices with increasing $n$. According to three figures in
the right column of \cref{fig:syntheticresult}, \cref{alg:rand-gsv}
achieves sufficiently high accuracy for comparative analysis problems.
\Cref{alg:rand-gsv}, as in the left column of \cref{fig:syntheticresult},
is the fastest among five algorithms. As the matrix size increases, the
runtime gap between \cref{alg:rand-gsv} and other algorithms further
enlarges.

As shown in \cref{fig:normfig}, when the basis approximation error
$\fnorm{G_i - Q_i Q_i^{\H} G_i}$ decreases, the absolute errors of GSVs
$\fnorm{\Sigma_i^\star - \Sigma_i}$ decrease for $i = 1, 2$. If only a few
digits of accuracy is needed for GSVs, which is the usual case in
practice, we could adopt a small number of bases in the approximation,
i.e., $Q_i$ with a small number of columns. The computational cost could
then be further reduced.

\subsection{Genome-scale expression data sets}

\Cref{alg:rand-gsv} is applied to two practical genome-scale expression
data sets in this section: yeast and human cell-cycle expression data set
and mice macrophage gene expression data set.

\subsubsection{Yeast and human cell-cycle expression data set}
\label{sec:yeast-human}

A yeast and human cell-cycle expression data set is adopted, which is
available at \url{http://genome-www.stanford.edu/GSVD/}. In this data set,
4523-genes $\times$ 18-arrays are analyzed for yeast and 12056-genes
$\times$ 18-arrays are analyzed for human. Hence, the matrix $G_1$ and
$G_2$ are of size $4523 \times 18$ and $12056 \times 18$, respectively.
Numerically, we validate that matrix $G_1$, $G_2$, and $\begin{pmatrix}
G_1 \\ G_2 \end{pmatrix}$ are of full column rank. Notice that some data
in the data set are missing. We adopt two methods~\cite{abb03}, the SVD
interpolation and spline, to recover these data. The runtimes of various
algorithms are reported in \cref{tab:yeast-human-runtime} and the
accuracies of \cref{alg:rand-gsv} for various comparative analysis
quantities are given in \cref{tab:yeast-human-error}.
\Cref{fig:yeast-human-svd-interpolation} and \cref{fig:yeast-human-spline}
illustrate the comparative analysis quantities of the SVD interpolation
and spline methods, respectively.

\begin{table}[htb]
    \center
    \caption{Runtime (second) of algorithms on the yeast and human
    cell-cycle expression data set with SVD interpolation and spline.}
    \label{tab:yeast-human-runtime}
    \begin{tabular}{lrr}
    \toprule
    Algorithm & SVD interpolation & Spline \\
    \toprule
    \cref{alg:rand-gsv}      & 0.000481 & 0.000470 \\
    {\tt economy-sized gsvd} & 0.003390 & 0.003914 \\
    {\tt gsvd}               & 3.808657 & 3.607354 \\
    Algorithm in \cite{f05}  & 0.002306 & 0.002280 \\
    RN method~\cite{xnb20}   & 0.003321 & 0.003868 \\
    \bottomrule
    \end{tabular}
\end{table}

\begin{table}[ht]
    \center
    \caption{Absolute errors of \cref{alg:rand-gsv} on the yeast and human
    cell-cycle expression data set with SVD interpolation and spline.}
    \label{tab:yeast-human-error}
    \begin{tabular}{lccccc}
    \toprule
    & \multicolumn{5}{c}{Absolute error} \\
    \cmidrule{2-6}
    & $\vartheta_\nu$ & $D_1$ & $D_2$ & $P_{1, \nu}$ & $P_{2, \nu}$ \\
    \toprule
    SVD interpolation & 4.13E$-$14 & 1.44E$-$15 & 1.33E$-$15 & 3.55E$-$15
    & 3.06E$-$15 \\
    Spline            & 1.99E$-$14 & 3.33E$-$16 & 6.66E$-$16 & 2.47E$-$15
    & 1.53E$-$15 \\
    \bottomrule
    \end{tabular}
\end{table}

\begin{figure}[htb]
    \centering
    \includegraphics[width=5.5in]{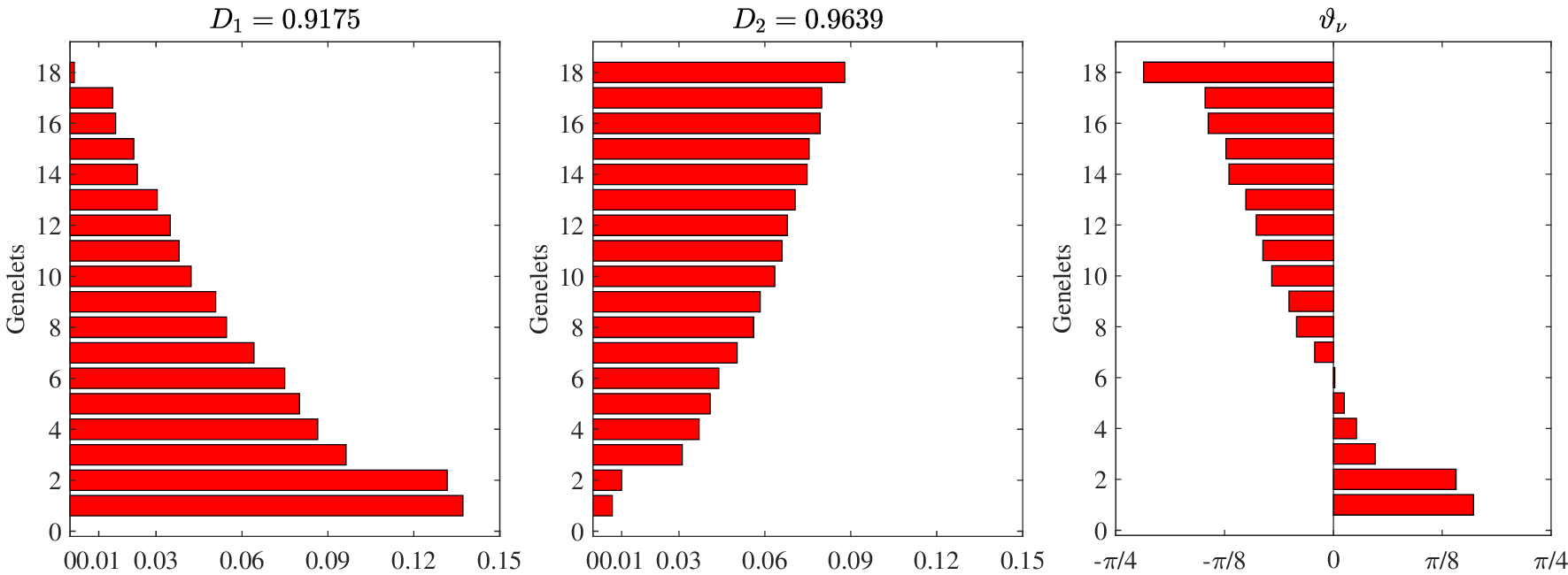}
    \caption{$P_{i,\nu}$, $D_i$ and $\vartheta_{\nu}$ computed by
    \cref{alg:rand-gsv} for yeast and human cell-cycle expression data set
    with SVD interpolation.}
    \label{fig:yeast-human-svd-interpolation}
\end{figure}

\begin{figure}[htb]
    \centering
    \includegraphics[width=5.5in]{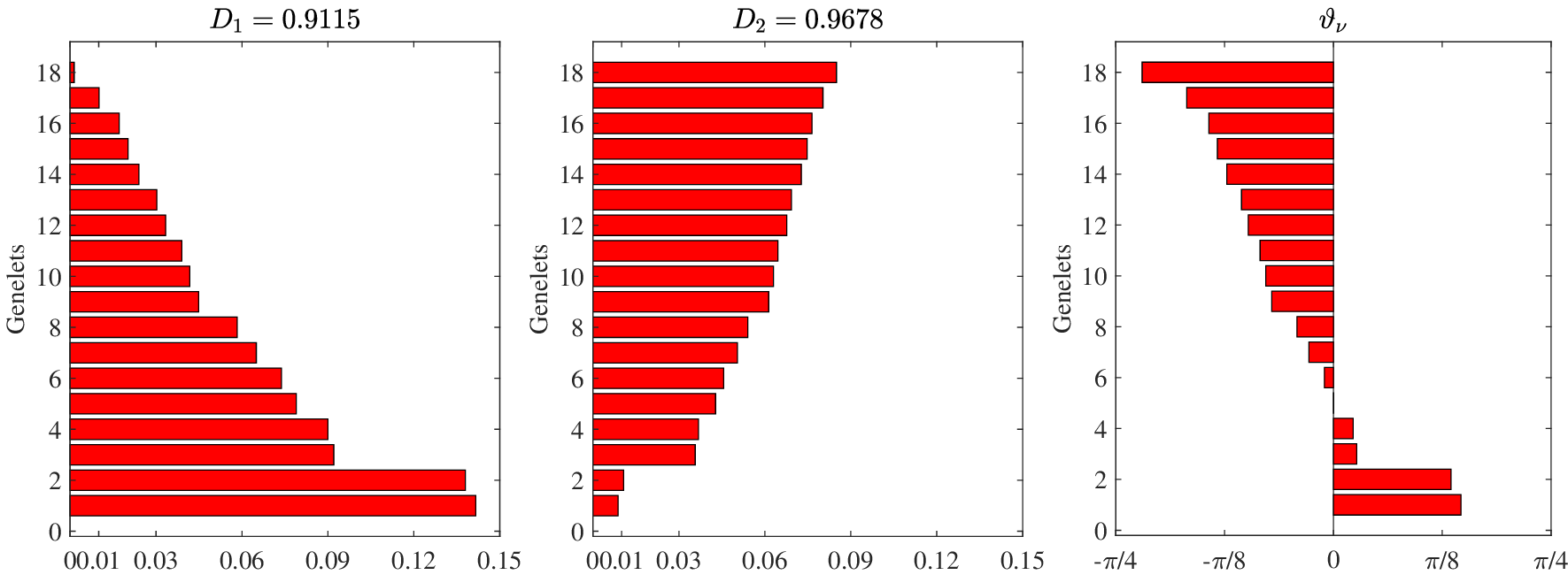}
    \caption{$P_{i,\nu}$, $D_i$ and $\vartheta_{\nu}$ computed by
    \cref{alg:rand-gsv} for yeast and human cell-cycle expression data set
    with spline.}
    \label{fig:yeast-human-spline}
\end{figure}

\Cref{tab:yeast-human-runtime} shows that the proposed
algorithm~\cref{alg:rand-gsv} is about five times faster than the second
fastest algorithms. The absolute accuracies obtained by
\cref{alg:rand-gsv}, as shown in \cref{tab:yeast-human-error}, are all
around machine epsilon, which is accurate enough in practical comparative
analysis. In this data set, the number of columns, i.e., $n$, is much
smaller than $m$ and $p$. Hence, both $G_1$ and $G_2$ matrices are
tall-and-skinny. All explored algorithms except {\tt gsvd} benefit from
the tall-and-skinny property in the data set and their runtimes are
significantly smaller than that of {\tt gsvd}.

According to \cref{fig:yeast-human-svd-interpolation} and
\cref{fig:yeast-human-spline}, yeast generalized fractions of
eigenexpression shows that the first two arrays capture more than $12\%$
of the overall yeast expression and human generalized fractions of
eigenexpression shows that the last array captures about $9\%$. When different
missing data recovery methods are adopted, the comparative analysis
results differ slightly. When the SVD interpolation is adopted, the sixth
array is equally significant in both data sets with $\vartheta_6 \approx
0$. When the spline is adopted, the $\vartheta$ of the fifth array is the
most close to zero.

\subsubsection{Mice macrophage gene expression data set}

After the mice macrophage with Polyamide (PA) and RPMI1640 medium
(containing phenol red) experiment, we can obtain data sets of the gene
mRNA expression level. This data set for mice macrophage with Polyamide
(PA) stimulation tabulates the matrix of size 22580-genes$\times$
9000-arrays and with RPMI1640 medium tabulates the matrix of size
22580-genes$\times$ 9000-arrays. Compared to the data set in
\cref{sec:yeast-human}, the data set in this section has much comparable
$m$, $p$, and $n$. data. The runtimes of various algorithms are reported
in \cref{tab:mice-runtime} and the accuracies of \cref{alg:rand-gsv} for
various comparative analysis quantities are given in
\cref{tab:mice-error}. \Cref{fig:mice} illustrates the histogram of
comparative analysis quantities.

\begin{table}[htb]
    \center
    \caption{Runtime (second) of algorithms on the mice macrophage gene
    expression data set.}
    \label{tab:mice-runtime}
    \begin{tabular}{lr}
    \toprule
    Algorithm & Runtime\\
    \toprule
    \cref{alg:rand-gsv}      &   400.06 \\
    {\tt economy-sized gsvd} & 40211.43 \\
    {\tt gsvd}               & 92874.80 \\
    Algorithm in \cite{f05}  & 38783.35 \\
    RN method~\cite{xnb20}   & 38932.93 \\
    \bottomrule
    \end{tabular}
\end{table}

\begin{table}[htb]
    \center
    \caption{Absolute errors of \cref{alg:rand-gsv} on the mice macrophage
    gene expression data set.}
    \label{tab:mice-error}
    \begin{tabular}{ccccc}
    \toprule
    \multicolumn{5}{c}{Absolute error} \\
    \midrule
    $\vartheta_\nu$ & $D_1$ & $D_2$ & $P_{1, \nu}$ & $P_{2, \nu}$ \\
    \toprule
    4.90E$-$12 & 7.21E$-$12 & 3.09E$-$12 & 6.90E$-$12 & 7.03E$-$12 \\
    \bottomrule
    \end{tabular}
\end{table}

\begin{figure}[htb]
    \centering
    \includegraphics[width=5.5in]{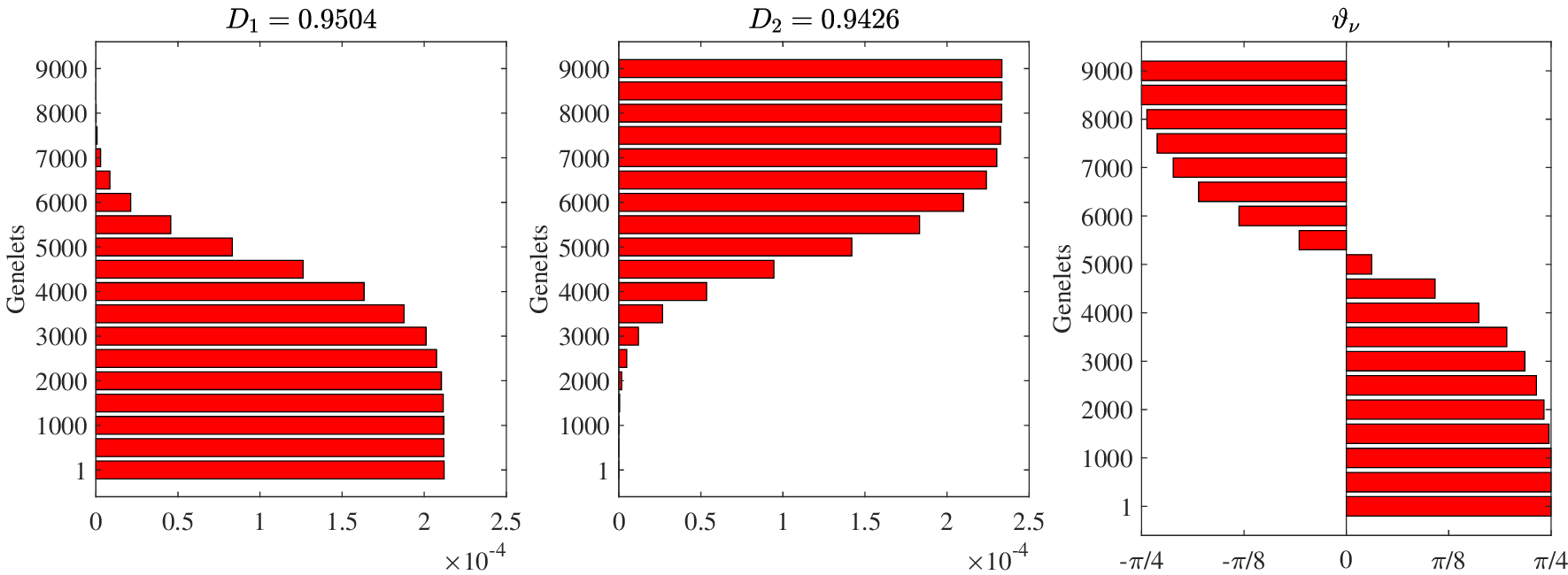}
    \caption{$P_{i,\nu}$, $D_i$ and $\vartheta_{\nu}$ computed by
    \cref{alg:rand-gsv} for mice macrophage gene expression data set.}
    \label{fig:mice}
\end{figure}

\Cref{tab:mice-runtime} shows that the proposed
algorithm~\cref{alg:rand-gsv} is at least one hundred times faster than
all other algorithms. The runtimes of other algorithms in this data set
are beyond 10 hours. While \cref{alg:rand-gsv} could obtain desired
results in less than 10 mins. The absolute accuracies obtained by
\cref{alg:rand-gsv}, as shown in \cref{tab:mice-error}, are all around
$10^{-12}$.

In this data set, the first 2000 genelets are highly significant in PA
gene expression relative to the RPMI1640 medium gene expression. The
4500th to 5000th genelets are almost equally significant in both, with a
slightly higher significance in the PA gene expression. The 8500th to
9000th genelets are highly significant in the RPMI1640 medium gene
expression data. The randomized projection in \cref{alg:rand-gsv} mostly
benefits from genelets that are significant in one gene expression, where
either $\alpha_i$ or $\beta_i$ is compressible. In many comparative
analysis data set, many genelets have biased significantly towards one
side. Hence, we conclude \cref{alg:rand-gsv} is an efficient algorithm for
calculating GSVs of comparative analysis data sets.

\section{Conclusion}
\label{sec:conclusion}

The target of this paper is to efficiently address the GSV problems in
comparative analysis. A randomized GSV algorithm is proposed, where the
key is to approximate bases of both $G_1$ and $G_2$ matrices by a randomized basis extraction
algorithm. By the overall procedure algorithm, generalized fractions of
eigenexpression and generalized normalized Shannon entropy for comparative
analysis of a class of genome-scale expression data sets can be
efficiently computed. The approximation accuracy of the randomized basis
extraction algorithm is analyzed. Combined with sensitivity analysis of
GSVs, we prove the error analysis of various comparative analysis
quantities. Finally, for both synthetic data sets and practical
genome-scale expression data sets, we demonstrate that our algorithm
outperforms other existing GSV algorithms in runtime. And the accuracy of
our algorithm is sufficient for the comparative analysis tasks.

\end{document}